\documentclass[a4paper,leqno,12pt]{article}

\textheight 23cm
\textwidth 16cm
\topmargin -12pt
\evensidemargin -0,04cm
\oddsidemargin -0,04cm
\marginparsep 0pt
\marginparwidth 60pt
\headsep 8,1mm
\headheight 14,5pt
\footskip 12mm
\topskip 0pt


\usepackage[pdftex]{graphics,color}
\usepackage{graphics}
\usepackage{amsmath,bm,stmaryrd,mathtools, xy, subfigure}
\usepackage{array}
\usepackage{amscd}
\usepackage{amssymb}
\usepackage{enumerate}
\usepackage{indentfirst}
\usepackage{latexsym}
\usepackage{multicol}
\usepackage{color}
\usepackage{theorem}




\makeatletter\@addtoreset{equation}{section}\makeatother


\newtheorem{theorem}{Theorem}

\begingroup
\newtheorem{lemma}{Lemma}[section]
\newtheorem{proposition}[lemma]{Proposition}

\endgroup

\newtheorem{definition}{Definition}
{\theorembodyfont{\rmfamily}\newtheorem{remark}{Remark}[section]}

\newenvironment{proof}{\textit{Proof. }}{\hfill$\Box$}

\newcommand{\ep}{\varepsilon}


\newcommand{\ds}{\displaystyle}
\newcommand{\beq}[1]{\begin{equation} \label{#1}\ds}
\newcommand{\eeq}{\end{equation}}
\newcommand{\bml}[1]{\beq{#1} \begin{array}{c}\ds}
\newcommand{\eml}{\end{array}\eeq}
\newcommand{\beqq}{\begin{equation*}\ds}
\newcommand{\eeqq}{\end{equation*}}
\newcommand{\bmll}{\beqq \begin{array}{c}\ds}
\newcommand{\emll}{\end{array}\eeqq}
\renewcommand{\div}{{\rm div}\,}

\newcommand{\abs}[1]{\ensuremath{\left| #1 \right|}}


\def \de{\partial}

\def \ep{\varepsilon}

\newcommand{\R}{\mathbb{R}}

\newcommand{\id}{{\rm Id}}
\newcommand{\Sym}{\mathcal{S}}

\begin{document}

\author{Elisabetta Chiodaroli and Ond\v{r}ej Kreml\thanks{O.K. acknowledges the support of the GA\v CR (Czech Science Foundation) project GJ17-01694Y in the general framework of RVO: 67985840.}}
\title{Non--uniqueness of admissible weak solutions to the Riemann problem for the isentropic Euler equations}
\date{}

\maketitle

\centerline{EPFL Lausanne}

\centerline{Station 8, CH-1015 Lausanne, Switzerland}

\bigskip

\centerline{Institute of Mathematics, Czech Academy of Sciences}

\centerline{\v Zitn\'a 25, Prague 1, 115 67, Czech Republic}

 
\begin{abstract}
We study the Riemann problem for the multidimensional compressible isentropic Euler equations. 
Using the framework developed in \cite{ChDLKr} and based on the techniques of De Lellis and Sz\'ekelyhidi \cite{dls2},
we extend the results of \cite{ChKr} and prove that whenever the initial Riemann data give rise to a self-similar solution consisting of 
one admissible shock and one rarefaction wave and are not too far from lying on a simple shock wave, the problem admits also infinitely many admissible weak solutions.
\end{abstract}


\section{Introduction}
In this note we consider the Euler system of isentropic gas dynamics in two space dimensions
\begin{equation}\label{eq:Euler system}
\left\{\begin{array}{l}
\partial_t \rho + {\rm div}_x (\rho v) \;=\; 0\\
\partial_t (\rho v) + {\rm div}_x \left(\rho v\otimes v \right) + \nabla_x [ p(\rho)]\;=\; 0\\
\rho (\cdot,0)\;=\; \rho^0\\
v (\cdot, 0)\;=\; v^0 \, ,
\end{array}\right.
\end{equation}
where the unknowns $(\rho,v)$ denote the density and the velocity of the gas respectively. 
The pressure $p$ is a given function of $\rho$ satisfying the hyperbolicity condition $p'>0$.
We will work with pressure laws $p(\rho)=  \rho^\gamma$ with constant $\gamma\geq 1$. 
We also denote the space variable as $x=(x_1, x_2)\in \R^2$.

Being a hyperbolic system of conservation laws, the system \eqref{eq:Euler system} admits a single (mathematical) entropy, namely the physical total energy. 
Denoting $\ep(\rho)$ the internal energy related to the pressure through $p(r)=r^2 \varepsilon'(r)$ the \textit{entropy (energy) inequality} reads as
\begin{equation} \label{eq:energy inequality}
\de_t \left(\rho \varepsilon(\rho)+\rho
\frac{\abs{v}^2}{2}\right)+\div_x
\left[\left(\rho\varepsilon(\rho)+\rho
\frac{\abs{v}^2}{2}+p(\rho)\right) v \right]
\;\leq\; 0.
\end{equation}
We consider bounded weak solutions of \eqref{eq:Euler system} which satisfy \eqref{eq:Euler system} in the usual distributional sense.
Moreover, we say that a weak solution to \eqref{eq:Euler system} is \textit{admissible}, when it satisfies \eqref{eq:energy inequality} 
in the sense of distributions; we also call such solutions \textit{entropy solutions}. More precisely, admissible/entropy solutions are required to satisfy a slightly stronger condition,
i.e., a form of \eqref{eq:energy inequality} which involves also the initial data (see Definition 3 in \cite{ChKr}).
We refer to the monographs \cite{br} and \cite{da} for detailed treatises of the related background literature.

In the last years, jointly with Camillo De Lellis, we could prove a surprising series of results concerning non--uniqueness of admissible solutions 
to the isentropic Euler equations in more than one space dimension (see \cite{ch}, \cite{ChDLKr}, \cite{ChKr} and also \cite{ChFeKr}), thereby showing that the most popular concept
of admissible solution, the entropy inequality, fails even under quite strong assumption on the initial data.
Non--uniqueness originates from the construction of \textit{non--standard} rapidly oscillating solutions to \eqref{eq:Euler system} which are also admissible and
are built via 
subsequent versions of the method of convex integration originally developed by
De Lellis and Sz\'ekelyhidi \cite{dls2} for the incompressible Euler equations (see also \cite{sz}).
The results we present here arise as a continuation of the work done in \cite{ChDLKr} and \cite{ChKr}.

We are concerned with the Riemann problem for the system \eqref{eq:Euler system}, more specifically we consider initial data of the following particular form
\begin{equation}\label{eq:R_data}
(\rho^0 (x), v^0 (x)) := \left\{
\begin{array}{ll}
(\rho_-, v_-) \quad & \mbox{if $x_2<0$}\\ \\
(\rho_+, v_+) & \mbox{if $x_2>0$,} 
\end{array}\right. 
\end{equation}
where $\rho_\pm, v_\pm$ are constants.
The Riemann problem \eqref{eq:Euler system}-\eqref{eq:R_data} has been the building block in the construction of non--unique entropy solutions for 
the isentropic Euler equations starting from Lipschitz initial data in \cite{ChDLKr}) and also 
in \cite{ChKr} for the investigation on the effectiveness of the entropy dissipation rate criterion, as proposed by Dafermos in \cite{Da1}, for the same system of equations (see also
\cite{fe} for complemetary results on the Dafermos criterion).

It is well known that the Riemann problem \eqref{eq:Euler system}-\eqref{eq:R_data} admits self--similar solutions $(\rho,v)(x, t):= (r,w)(x_2/t)$ and that uniqueness holds
in the class of admissible solutions if we require them to be self-similar and to have locally bounded variation.
On the other hand, both in \cite{ChDLKr} and in \cite{ChKr} it is illustrated that, once these hypotheses are removed, uniqueness of admissible solutions can fail.
In particular, in \cite{ChKr} it was proven that any Riemann data whose associated self--similar solution consists of two shocks
admit also infinitely many non--standard solutions, which are admissible too and are genuinely two--dimensional (depend non--trivially on $x_1$).

In this note we aim at better understanding the relation between the structure of the Riemann data \eqref{eq:R_data} and the formation of admissible non--standard solutions 
originating from such data.
As detailed in Section $2$ of \cite{ChKr}, if we search for self--similar solutions $(\rho,v)(x, t):= (r,w)(x_2/t)$ of the Riemann problem \eqref{eq:Euler system}-\eqref{eq:R_data},
then, depending on the values of the constants $\rho_\pm, v_\pm$, we encounter different cases.
In particular, if we choose $v_ {-1}=v_{+1}$, then the first component of the self--similar velocity will remain constant for all positive times and
the relation between the left state $(\rho_-, v_{-2})$ and the right state $(\rho_+, v_{+2})$ determines the form of the self--similar solution.
If the right state lies on a \textit{simple wave} going through the left state (see Fig, \ref{fig:waves}), then the self--similar solution consists of either a single \textit{shock}
or a single \textit{rarefaction wave} as explained in Lemma $2.3$ in \cite{ChKr}.  We refer to \cite{da} for the precise definitions of shock and rarefaction waves. In Fig. \ref{fig:waves} we denote by $S_{1,3}$ and by $R_{1,3}$ the $1,3$- shock and $1,3$-rarefaction
waves through the point $(\rho_-, v_{-2})$.
\begin{figure}[h]
\begin{center}
\scalebox{0.5}{\input{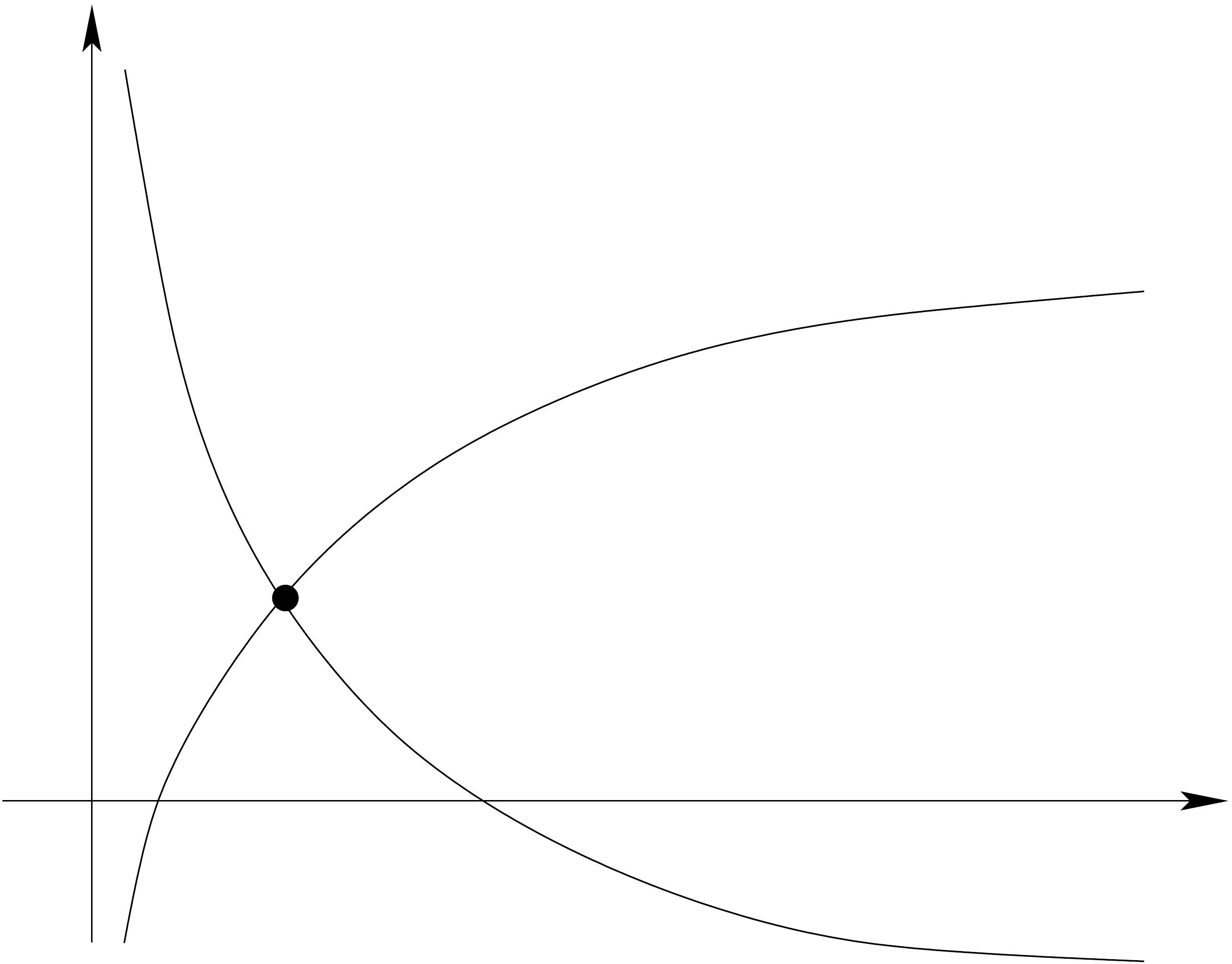_t}}
\caption{Shocks and rarefaction curves through the point $(\rho_-, v_-)$.}
\label{fig:waves}
\end{center}
\end{figure}

If $(\rho_-, v_{-2})$ and $(\rho_+, v_{+2})$ do not lie on any simple wave, then we can distinguish four situations:
\begin{itemize}
 \item[CASE 1:]  $(\rho_+, v_{+2}) \in $ ``region I'': the solution consists of a $1$-shock and a $3$-rarefaction;
 \item[CASE 2:]  $(\rho_+, v_{+2}) \in $ ``region II'': the solution consists of two rarefaction waves;
 \item[CASE 3:]  $(\rho_+, v_{+2}) \in $ ``region III'': the solution consists of two shocks;
 \item[CASE 4:]  $(\rho_+, v_{+2}) \in $ ``region IV'': the solution consists of a $1$-rarefaction wave and a $3$-shock
\end{itemize}
In Fig. \ref{fig:case1}--\ref{fig:case4}, we describe schematically how these four cases look like placing side by side the wave curves plots 
and the pattern of the self--similar solution in the $x_2-t$ plane.
When the self--similar solution contains no discontinuities, i.e. when it consists of rarefaction waves only (CASE 2), the Riemann problem \eqref{eq:Euler system}-\eqref{eq:R_data} enjoys uniqueness as was shown first by Chen and Chen \cite{chen}. The same result was obtained in \cite{fekr}, the authors not being aware of the result of Chen and Chen. Similar results are contained also in the work of Serre \cite{serre} and related are also the work of DiPerna \cite{DP} and the works of Chen and Frid \cite{CF1} and \cite{CF2}. 
Oppositely, when the self--similar solutions consists of two shocks (CASE 3), the Riemann problem \eqref{eq:Euler system}-\eqref{eq:R_data} admits also infinitely many 
non--standard solutions as proven in \cite{ChKr}.
 \begin{figure}[hp]
 \centering
 \subfigure[Wave curves]
  {\scalebox{0.45}{\input{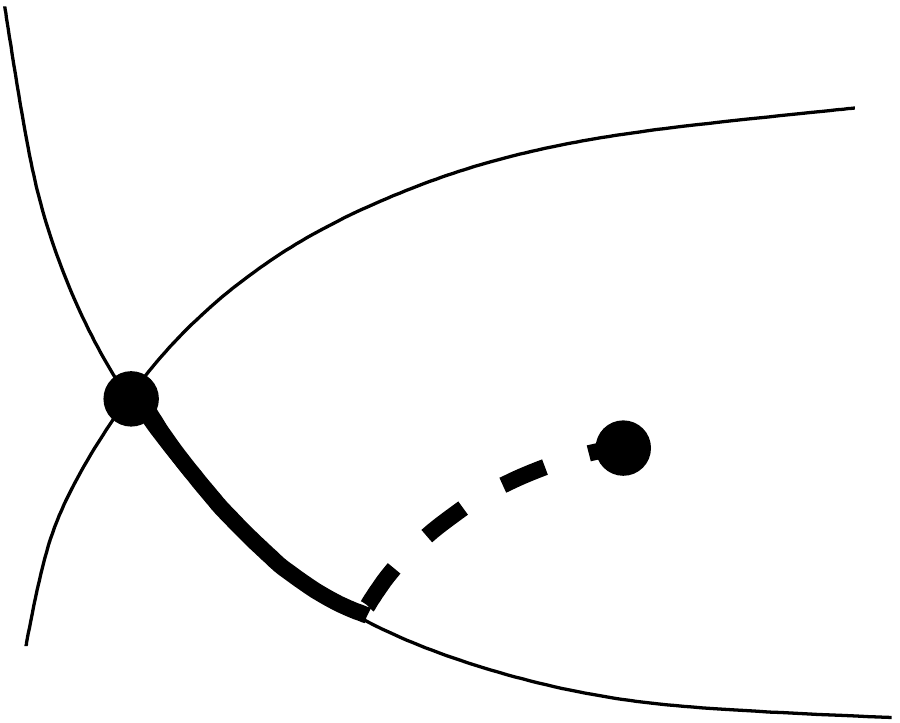_t}}}
 \hspace{8mm}
 \subfigure[Wave fan]
 {\scalebox{0.45}{\input{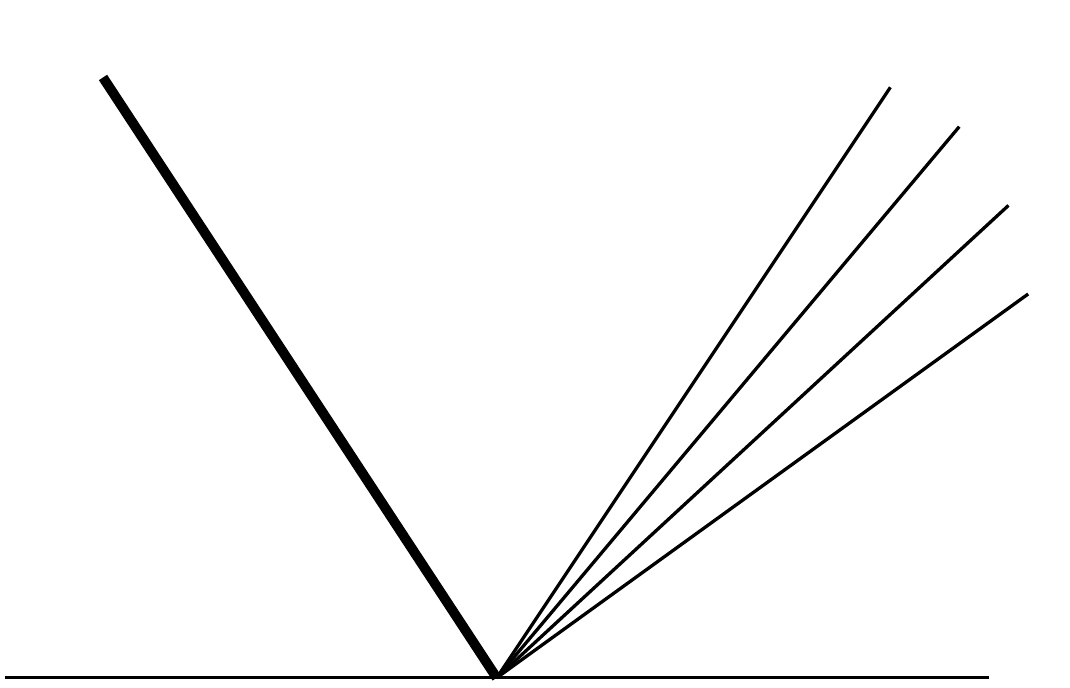_t}}}
 \caption{Case 1.}
\label{fig:case1}
 \end{figure}
 \begin{figure}[hp]
 \centering
 \subfigure[Wave curves]
  {\scalebox{0.45}{\input{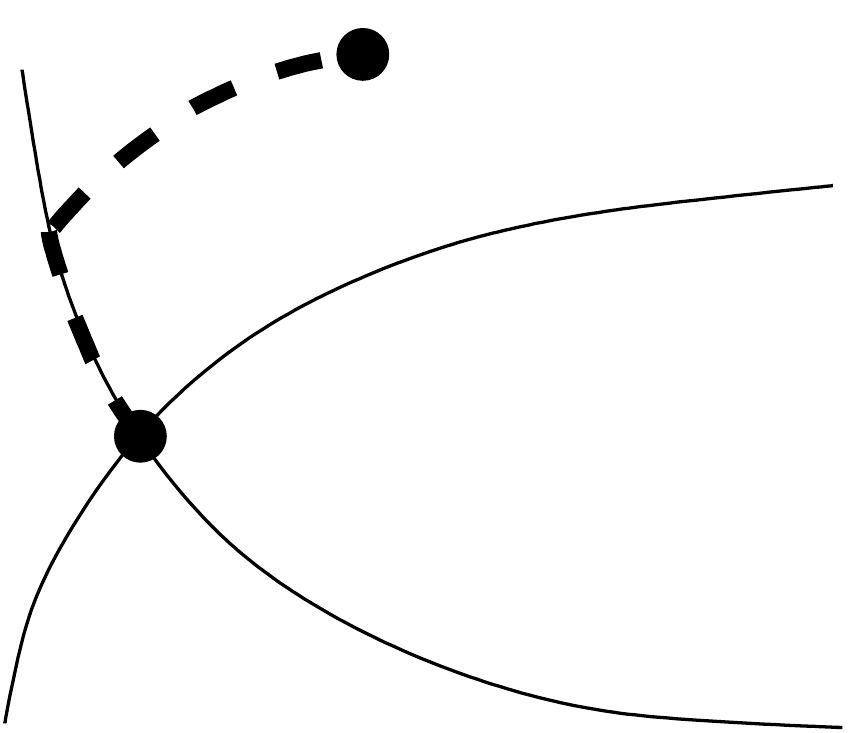_t}}}
 \hspace{8mm}
 \subfigure[Wave fan]
 {\scalebox{0.45}{\input{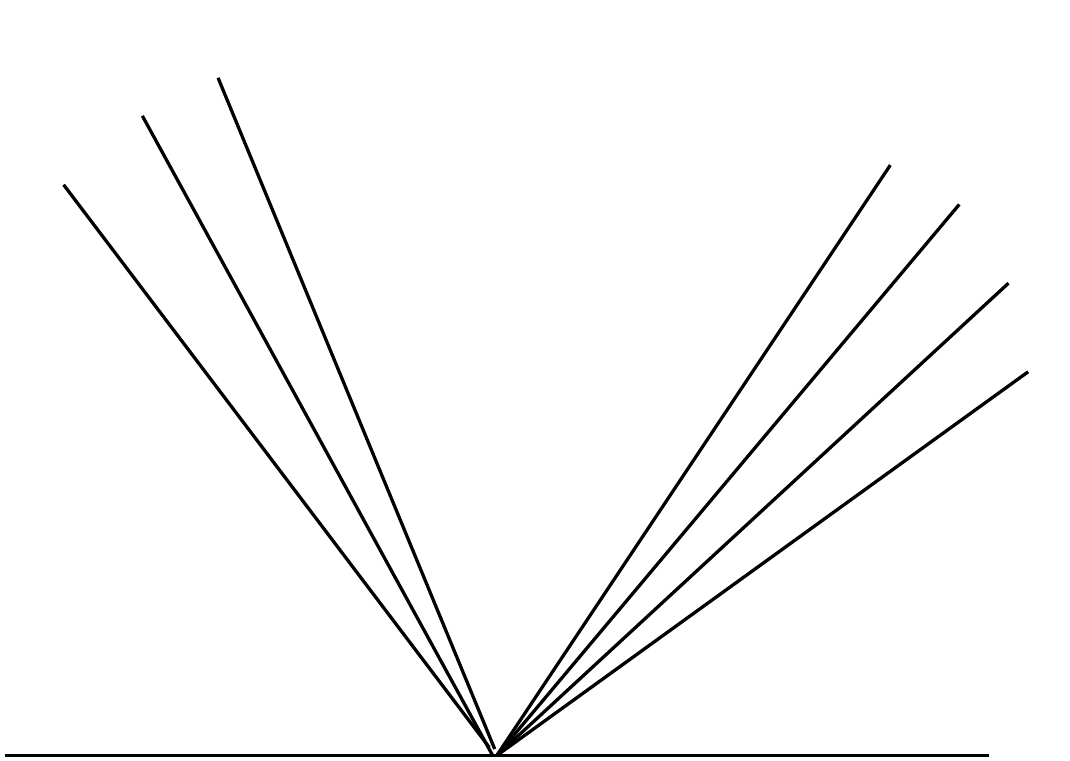_t}}}
 \caption{Case 2.}
\label{fig:case2}
 \end{figure}
 \begin{figure}[hp]
 \centering
 \subfigure[Wave curves]
  {\scalebox{0.45}{\input{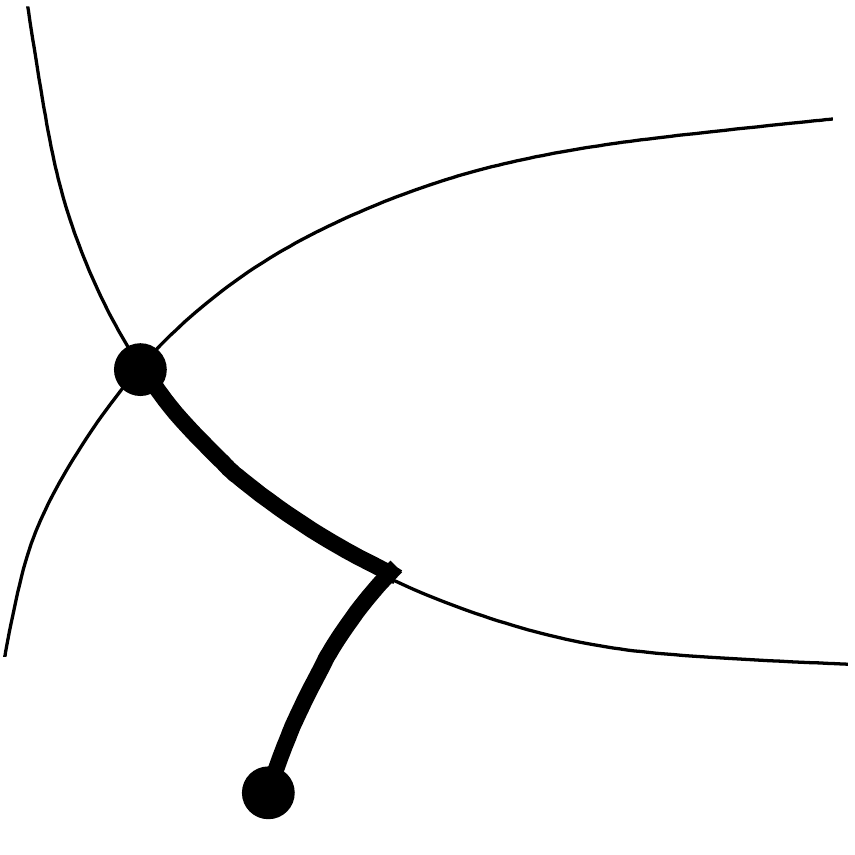_t}}}
 \hspace{8mm}
 \subfigure[Wave fan]
 {\scalebox{0.45}{\input{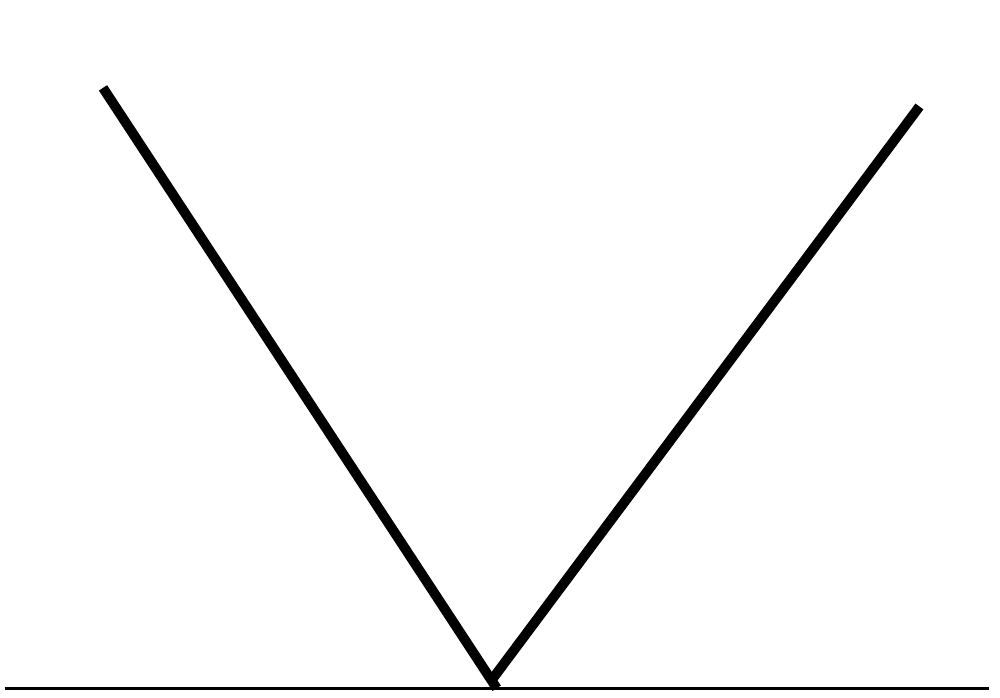_t}}}
 \caption{Case 3.}
\label{fig:case3}
 \end{figure}
 \begin{figure}[hp]
 \centering
 \subfigure[Wave curves]
  {\scalebox{0.45}{\input{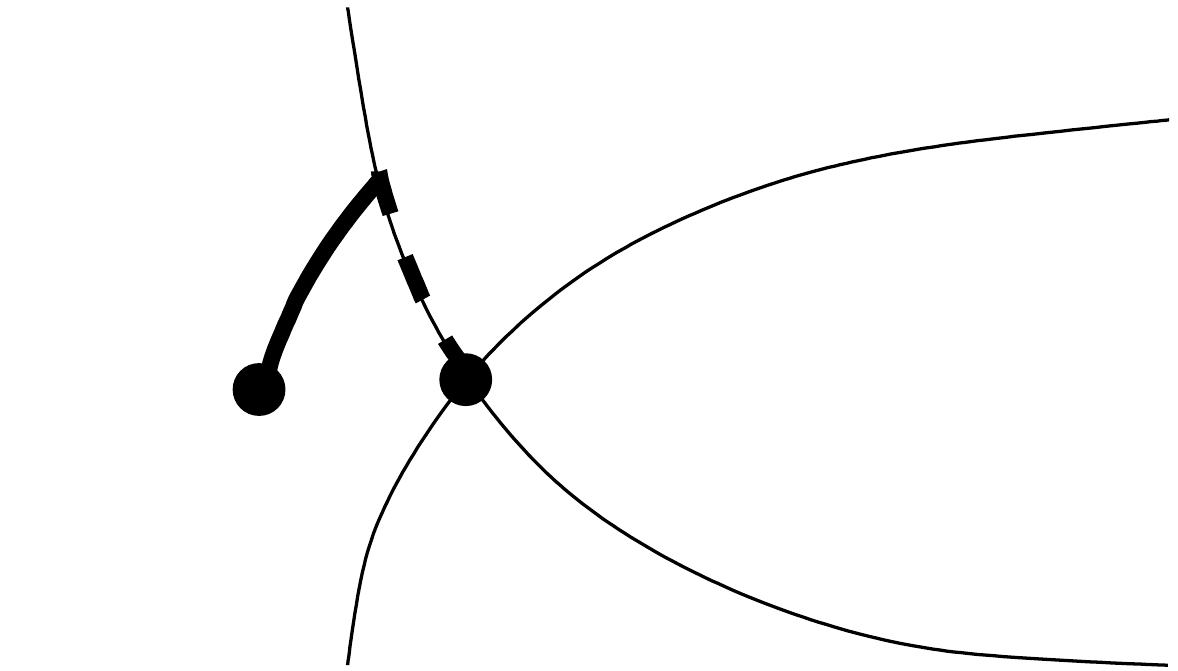_t}}}
 \hspace{8mm}
 \subfigure[Wave fan]
 {\scalebox{0.45}{\input{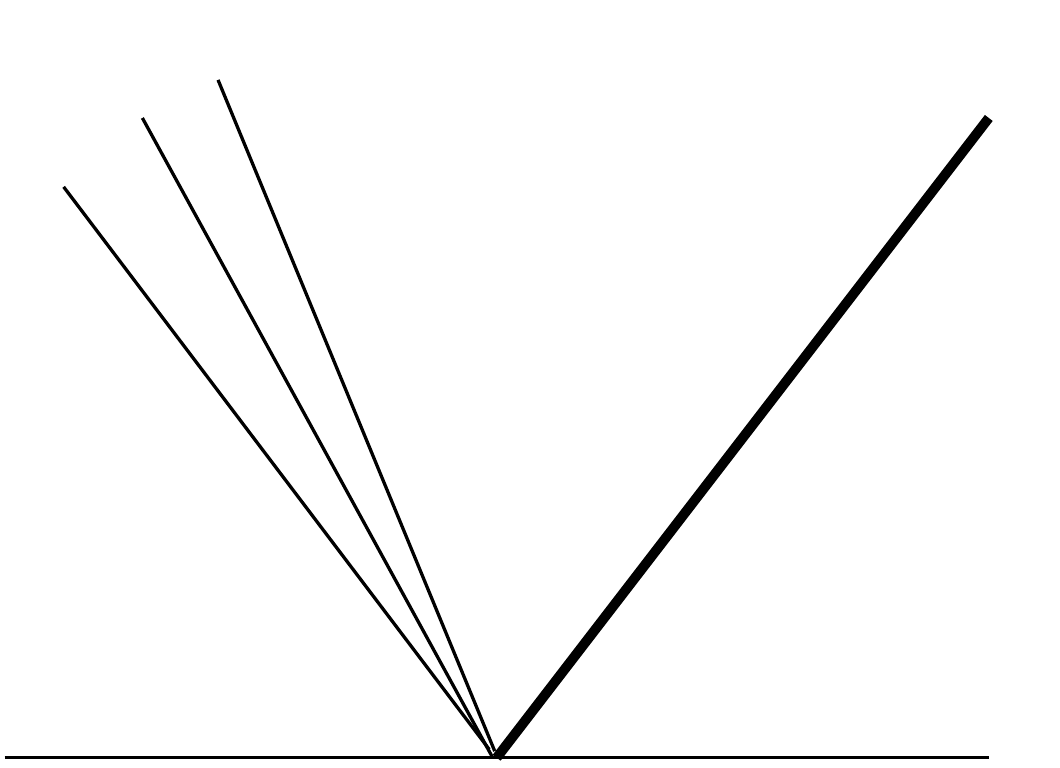_t}}}
 \caption{Case 4.}
\label{fig:case4}
 \end{figure}

In this note we investigate whether such non--uniqueness of admissible solutions can be obtained in CASE 1 and in CASE 4 as well, at least when we are close enough 
(in a suitable sense) to CASE 3 and far from CASE 2. We do not discuss the case of Riemann data lying on a simple shock wave even if we expect that non--uniqueness should hold in this case.

Our main result proves that non--uniqueness of admissible solutions of the Riemann problem \eqref{eq:Euler system}-\eqref{eq:R_data} indeed occurs at least for right data
$(\rho_+, v_{+2})$ belonging to subregions of region I and IV which are adjacent to region III and detached from region II. The result is independent of the specific choice
for the constant $v_1$.
The precise statement of the result is as follows. 

\begin{theorem}\label{t:main}
Let $p(\rho) = \rho^\gamma$, $\gamma > 1$. Let $\rho_- \neq \rho_+$, $\rho_{\pm} > 0$ and $v_{+2} \in \mathbb{R}$ be given. 
There exists $V = V(\rho_-,\rho_+,v_{+2},\gamma) < \sqrt{\frac{(\rho_+ - \rho_-)(p(\rho_+) - p(\rho_-))}{\rho_+\rho_-}}$ 
such that for all $v_{-2}$ satisfying $V < v_{-2} - v_{+2} < \sqrt{\frac{(\rho_+ - \rho_-)(p(\rho_+) - p(\rho_-))}{\rho_+\rho_-}}$ there exists infinitely many bounded admissible weak solutions to the Euler equations \eqref{eq:Euler system} with Riemann initial data \eqref{eq:R_data}.
\end{theorem}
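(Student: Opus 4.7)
The plan is to follow the fan subsolution strategy developed in \cite{ChDLKr} and used in \cite{ChKr} for the two--shock case, adapting it to the configuration of a single admissible shock plus a rarefaction. The rationale is that along the one--parameter family of simple 1--shock (resp. 3--shock) curves through $(\rho_-, v_{-2})$, the endpoint at $v_{-2} - v_{+2} = \sqrt{(\rho_+-\rho_-)(p(\rho_+)-p(\rho_-))/(\rho_+\rho_-)}$ is precisely the limiting Case 3 boundary, so by continuity the construction from Case 3 should persist for data slightly perturbed into regions I or IV, provided we remain far from Case 2.

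First, I would set up a self--similar fan subsolution with three regions separated by lines $x_2/t = \nu_0$ and $x_2/t = \nu_1$: the outer left region carries $(\rho_-, v_-)$, the outer right region carries $(\rho_+, v_+)$, and the middle ``turbulent'' region carries a constant density $\rho_1$, a constant averaged momentum $\rho_1 \widetilde{v}_1$ and a matrix traceless component $u_1$ subject to the positivity constraints that allow the convex integration machinery of \cite{dls2} to produce infinitely many genuine velocity fields. The Rankine--Hugoniot conditions at both interfaces $\nu_0,\nu_1$, together with the subsolution inequality inside the middle region, give a finite system of algebraic (in)equalities in the unknowns $\rho_1, \widetilde{v}_1, u_1, \nu_0, \nu_1$, with parameters $\rho_\pm, v_\pm, \gamma$.

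Next, I would superimpose the admissibility inequalities: at each interface a one--sided energy dissipation inequality of shock type, and in the middle region pointwise energy control of the wild velocities by the trace of $u_1$ plus the kinetic energy of $\widetilde{v}_1$. Crucially, in Cases 1 and 4 one of the two interfaces would correspond to a shock that is \emph{not} classically admissible (since the true Lax solution has a rarefaction on that side); however, one shows that if $v_{-2} - v_{+2}$ is sufficiently close from below to the simple--shock value, then the net dissipation across all three regions still has the correct sign. The key tool is to parametrize the problem by the deficit $\eta := \sqrt{(\rho_+-\rho_-)(p(\rho_+)-p(\rho_-))/(\rho_+\rho_-)} - (v_{-2}-v_{+2}) > 0$ and perform a perturbative analysis around $\eta = 0$, where the system reduces to the endpoint configuration on the simple shock curve, for which the construction of \cite{ChKr} is explicit.

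The expected main obstacle is the simultaneous solvability of the algebraic system and the admissibility inequalities for small $\eta > 0$. Unlike in the two--shock case where each interface independently dissipates energy, here one inequality works ``against us'' and must be compensated by the other two contributions; showing that this compensation is strict (hence yielding an open interval $(V, \sqrt{\ldots})$ of admissible values) requires a quantitative continuity argument on the implicitly defined subsolution parameters, ideally via an implicit function theorem in the limiting configuration $\eta \to 0^+$, together with explicit monotonicity estimates on the energy flux across the ``pseudo--shock'' interface. Once the fan subsolution is produced, the existence of infinitely many bounded admissible weak solutions to \eqref{eq:Euler system}--\eqref{eq:R_data} follows verbatim from the general convex integration statement proved in \cite{ChDLKr}.
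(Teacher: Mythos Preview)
Your overall strategy---fan subsolution with three sectors, the algebraic Rankine--Hugoniot/subsolution/admissibility system, and a perturbative argument in the distance $\eta$ to the simple--shock curve---is exactly the route taken in the paper. The paper likewise parametrizes the problem (using $\rho_1$ as the free variable, plus $u = v_{+2}-v_{-2}$), computes the limits of all relevant quantities as $u\to -\sqrt{T}$, verifies the strict inequalities there, and concludes by continuity. So the architecture of your plan is right.

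Two points of your outline, however, do not match what actually happens and would cause trouble if you tried to execute them as written.

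First, at $\eta = 0$ the data lie on a \emph{single} admissible shock, not in the two--shock region; the result of \cite{ChKr} is for Case~3 and does not cover this boundary case (indeed the present paper explicitly says it does not treat the simple--shock case). So you cannot simply ``invoke the explicit construction of \cite{ChKr}'' at $\eta=0$. What the paper does instead is compute directly the limits $\overline{\varepsilon_1}(\rho_1)$, $\overline{\beta}(\rho_1)$, $\overline{M_1}(\rho_1)$, $\overline{M_2}(\rho_1)$ as $u\to -\sqrt{T}$ and check by hand that $\overline{\varepsilon_1}>0$, $\overline{M_1}>\overline{M_2}$, and $\overline{M_1}>0$ on suitable portions of $(\rho_-,\rho_+)$; this is the actual core computation you would need to supply.

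Second, your description of the admissibility obstacle (``one inequality works against us and must be compensated by the other two contributions'') is not how the structure works. The entropy inequality is tested locally, so the two interface conditions \eqref{eq:E_left}--\eqref{eq:E_right} must \emph{each} hold separately; there is no averaging or compensation between them. The genuine issue is rather that the two conditions, once rewritten as upper and lower bounds $M_2\le \varepsilon_2\le M_1$ on the auxiliary parameter $\varepsilon_2$, must be compatible (i.e.\ $M_1>M_2$) and must allow $\varepsilon_2>0$ (i.e.\ $M_1>0$). The paper shows that in the limit $u\to-\sqrt{T}$ both inequalities are satisfied for all $\rho_1\in(\rho_-,\rho_+)$ and that $\overline{M_1}(\rho_+)>0$, so in fact neither interface is adversarial near the simple--shock boundary---no compensation mechanism is needed or available.
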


\begin{remark}\label{r:1}
The theorem is stated for the two-dimensional case but it naturally extends to any dimension $d > 1$.
\end{remark}

We remark that the upper bound $v_{-2} - v_{+2} < \sqrt{\frac{(\rho_+ - \rho_-)(p(\rho_+) - p(\rho_-))}{\rho_+\rho_-}}$ characterizes regions I and IV, or else said characterizes
Riemann data allowing for self--similar solutions consisting of a shock and a rarefaction wave (cf. \cite{ChKr}).
The existence of $V = V(\rho_-,\rho_+,v_{+2},\gamma)< \sqrt{\frac{(\rho_+ - \rho_-)(p(\rho_+) - p(\rho_-))}{\rho_+\rho_-}}$ and the corresponding lower bound for
$v_{-2} - v_{+2}$ guarantee instead the existence of subregions inside I and IV where non--uniqueness can arise.
In Fig. \ref{fig:region} we give a qualitative picture of such subregions in blue, while we describe in red the area where non--uniqueness holds due to \cite{ChKr}.

%

\begin{figure}[h]
\begin{center}
\scalebox{0.5}{\input{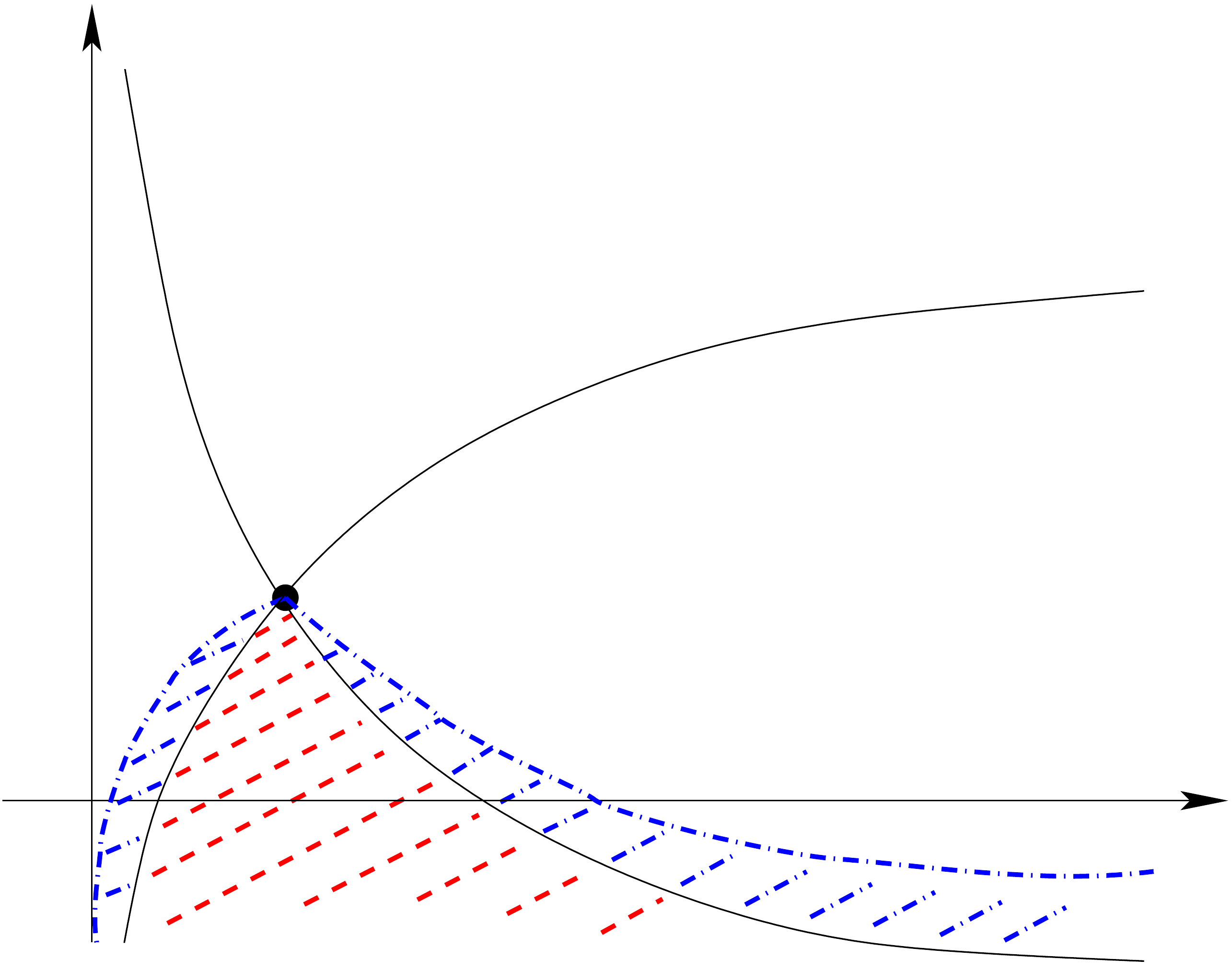_t}}
\caption{Regions where non--uniqueness holds.}
\label{fig:region}
\end{center}
\end{figure}

\section{Preliminaries}

We start with three important definitions taken from \cite{ChDLKr}.

\begin{definition}[Fan partition]\label{d:fan}
A {\em fan partition} of $\R^2\times (0, \infty)$ consists of three open sets $P_-, P_1, P_+$
of the following form 
\begin{align}
 P_- &= \{(x,t): t>0 \quad \mbox{and} \quad x_2 < \nu_- t\}\\
 P_1 &= \{(x,t): t>0 \quad \mbox{and} \quad \nu_- t < x_2 < \nu_+ t\}\\
 P_+ &= \{(x,t): t>0 \quad \mbox{and} \quad x_2 > \nu_+ t\},
\end{align}
where $\nu_- < \nu_+$ is an arbitrary couple of real numbers.
\end{definition}

\begin{definition}[Fan subsolution] \label{d:subs}
A {\em fan subsolution} to the compressible Euler equations \eqref{eq:Euler system} with
initial data \eqref{eq:R_data} is a triple 
$(\overline{\rho}, \overline{v}, \overline{u}): \R^2\times 
(0,\infty) \rightarrow (\R^+, \R^2, \Sym_0^{2\times2})$ of piecewise constant functions satisfying
the following requirements.
\begin{itemize}
\item[(i)] There is a fan partition $P_-, P_1, P_+$ of $\R^2\times (0, \infty)$ such that
\[
(\overline{\rho}, \overline{v}, \overline{u})=  
(\rho_-, v_-, u_-) \bm{1}_{P_-}
+ (\rho_1, v_1, u_1) \bm{1}_{P_1}
+ (\rho_+, v_+, u_+) \bm{1}_{P_+}
\]
where $\rho_1, v_1, u_1$ are constants with $\rho_1 >0$ and $u_\pm =
v_\pm\otimes v_\pm - \textstyle{\frac{1}{2}} |v_\pm|^2 \id$;
\item[(ii)] There exists a positive constant $C$ such that
\begin{equation} \label{eq:subsolution 2}
v_1\otimes v_1 - u_1 < \frac{C}{2} \id\, ;
\end{equation}
\item[(iii)] The triple $(\overline{\rho}, \overline{v}, \overline{u})$ solves the following system in the
sense of distributions:
\begin{align}
&\partial_t \overline{\rho} + {\rm div}_x (\overline{\rho} \, \overline{v}) \;=\; 0\label{eq:continuity}\\
&\partial_t (\overline{\rho} \, \overline{v})+{\rm div}_x \left(\overline{\rho} \, \overline{u} 
\right) + \nabla_x \left( p(\overline{\rho})+\frac{1}{2}\left( C \rho_1
\bm{1}_{P_1} + \overline{\rho} |\overline{v}|^2 \bm{1}_{P_+\cup P_-}\right)\right)= 0.\label{eq:momentum}
\end{align}
\end{itemize}
\end{definition}

\begin{definition}[Admissible fan subsolution]\label{d:admiss}
 A fan subsolution $(\overline{\rho}, \overline{v}, \overline{u})$ is said to be {\em admissible}
if it satisfies the following inequality in the sense of distributions
\begin{align} 
&\de_t \left(\overline{\rho} \varepsilon(\overline{\rho})\right)+\div_x
\left[\left(\overline{\rho}\varepsilon(\overline{\rho})+p(\overline{\rho})\right) \overline{v}\right]
 + \de_t \left( \overline{\rho} \frac{|\overline{v}|^2}{2} \bm{1}_{P_+\cup P_-} \right)
+ \div_x \left(\overline{\rho} \frac{|\overline{v}|^2}{2} \overline{v} \bm{1}_{P_+\cup P_-}\right)\nonumber\\
&\qquad\qquad+ \left[\de_t\left(\rho_1 \, \frac{C}{2} \, \bm{1}_{P_1}\right) 
+ \div_x\left(\rho_1 \, \overline{v} \, \frac{C}{2}  \, \bm{1}_{P_1}\right)\right]
\;\leq\; 0\, .\label{eq:admissible subsolution}
\end{align}
\end{definition}

The existence of infinitely many admissible weak solutions is related to the existence of a single admissible fan subsolution through the following proposition.

\begin{proposition}\label{p:subs}
Let $p$ be any $C^1$ function and $(\rho_\pm, v_\pm)$ be such that there exists at least one
admissible fan subsolution $(\overline{\rho}, \overline{v}, \overline{u})$ of \eqref{eq:Euler system}
with initial data \eqref{eq:R_data}. Then there are infinitely 
many bounded admissible solutions $(\rho, v)$ to \eqref{eq:Euler system},\eqref{eq:energy inequality}, \eqref{eq:R_data} such that 
$\rho=\overline{\rho}$ and $\abs{v}^2\bm{1}_{P_1} = C$.
\end{proposition}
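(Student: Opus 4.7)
The plan is to take the given admissible fan subsolution $(\overline{\rho},\overline{v},\overline{u})$, leave $v$ equal to $\overline{v}$ on the outer regions $P_-$ and $P_+$, and produce the infinitely many genuine Euler solutions by a convex integration construction carried out only on the middle wedge $P_1$. The key algebraic observation is that if, on $P_1$, one finds a bounded divergence-free vector field $v$ satisfying $\partial_t v + \mathrm{div}_x(v\otimes v) + \nabla_x q = 0$ for some scalar $q$ together with the pointwise constraint $v\otimes v - u_1 = \frac{C}{2}\,\id$ (equivalently $|v|^2 = C$, since $u_1$ is traceless), then the momentum flux $\rho_1 v\otimes v + p(\rho_1)\,\id = \rho_1 u_1 + \bigl(\tfrac{C\rho_1}{2}+p(\rho_1)\bigr)\id$ coincides exactly with the modified momentum flux appearing in \eqref{eq:momentum}. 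Hence the piecewise defined $(\rho,v)$ obtained by gluing automatically satisfies \eqref{eq:Euler system} in the sense of distributions, including across the interfaces $x_2=\nu_\pm t$, because the subsolution already satisfies the Rankine–Hugoniot conditions of the modified system there.

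Next I would formulate the convex integration problem on $P_1$. The constant triple $(v_1,u_1)$ is a pointwise \emph{strict} subsolution in the sense of \cite{dls2}, precisely because hypothesis (ii) of Definition \ref{d:subs} states $v_1\otimes v_1 - u_1 < \frac{C}{2}\id$, so it lies in the interior of the convex hull of the constraint set $K=\{(v,u):\ u = v\otimes v - \tfrac12 |v|^2\id,\ |v|^2 = C\}$. Invoking the Baire-category/plane-wave machinery of De Lellis and Sz\'ekelyhidi, adapted to a bounded subdomain as in \cite{ChDLKr} (one works in a space of $(v,u)$ solving the linear system $\mathrm{div}_x v=0,\ \partial_t v + \mathrm{div}_x u + \nabla_x q=0$ on $P_1$ with prescribed boundary trace $v_1$ on $\partial P_1$), one produces a residual set of bounded solutions of the linear system whose values lie in $K$ almost everywhere. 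Any such $v$, extended by $\overline{v}$ outside $P_1$, yields a bounded weak solution of \eqref{eq:Euler system} with $\rho = \overline{\rho}$ and $|v|^2\mathbf{1}_{P_1}=C$; distinct choices of the perturbation produce genuinely distinct solutions, giving infinitely many.

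Finally I would verify admissibility. On $P_+\cup P_-$ the energy identity for $(\rho,v)=(\overline{\rho},\overline{v})$ is classical, and on $P_1$ the constraint $|v|^2 = C$ makes the physical kinetic energy $\rho\frac{|v|^2}{2} = \frac{C\rho_1}{2}$ agree pointwise with the corresponding term $\rho_1\frac{C}{2}\mathbf{1}_{P_1}$ appearing in \eqref{eq:admissible subsolution}; similarly, the energy flux $(\rho\varepsilon(\rho)+\rho\frac{|v|^2}{2}+p(\rho))v$ matches the flux carried by the subsolution in $P_1$. Therefore the energy inequality \eqref{eq:energy inequality} for $(\rho,v)$ reduces, as a distribution, to \eqref{eq:admissible subsolution} for $(\overline{\rho},\overline{v},\overline{u})$, which holds by assumption.

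The main technical obstacle is the convex integration step on the wedge $P_1$: one has to construct the space of subsolutions with the correct linear constraints and outer trace $v_1$, verify the $\Lambda$-convex hull computation so that the strict inequality in (ii) lets oscillatory perturbations push the values into $K$, and carry out the Baire-category argument in the right function space; all of this is essentially the content of the analogous lemma in \cite{ChDLKr} and only needs to be invoked here, but conceptually it is where the whole non-uniqueness mechanism lives.
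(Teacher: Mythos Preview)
Your proposal is correct and follows essentially the same route as the paper: the paper isolates the convex integration step as Lemma~\ref{l:ci} (producing infinitely many perturbations $(\underline{v},\underline{u})$ supported in $\Omega=P_1$ with $(\tilde v,\tilde u)=(v_1,u_1)$ and $C_0=C$), then observes that each $(\overline{\rho},\overline{v}+\underline{v})$ is an admissible weak solution, deferring the full verification to \cite[Section~3.3]{ChDLKr}. Your outline reproduces exactly this structure, including the algebraic matching of the momentum and energy fluxes and the appeal to the De Lellis--Sz\'ekelyhidi machinery from \cite{ChDLKr}.
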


The core of the proof of Proposition \ref{p:subs} is the following fundamental Lemma.
\begin{lemma}\label{l:ci}
Let $(\tilde{v}, \tilde{u})\in \R^2\times \Sym_0^{2\times 2}$ and $C_0>0$ be such that $\tilde{v}\otimes \tilde{v}
- \tilde{u} < \frac{C_0}{2} \id$. For any open set $\Omega\subset \R^2\times \R$ there are infinitely many maps
$(\underline{v}, \underline{u}) \in L^\infty (\R^2\times \R , \R^2\times \Sym_0^{2\times 2})$ with the following property
\begin{itemize}
\item[(i)] $\underline{v}$ and $\underline{u}$ vanish identically outside $\Omega$;
\item[(ii)] $\div_x \underline{v} = 0$ and $\partial_t \underline{v} + \div_x \underline{u} = 0$;
\item[(iii)] $ (\tilde{v} + \underline{v})\otimes (\tilde{v} + \underline{v}) - (\tilde{u} + \underline{u}) = \frac{C_0}{2} \id$
a.e. on $\Omega$.
\end{itemize}
\end{lemma}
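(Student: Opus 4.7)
The plan is to prove Lemma \ref{l:ci} via the Tartar / De Lellis--Sz\'ekelyhidi convex integration scheme (see \cite{dls2,ChDLKr}), interpreted as a Baire category argument for a differential inclusion associated with the linear operator
\[
\mathcal{L}(v,u):=(\div_x v,\;\partial_t v+\div_x u).
\]
Since $\tilde v,\tilde u$ are constants, they are annihilated by space and time derivatives, so after the affine shift $w:=\tilde v+v$, $z:=\tilde u+u$ condition (ii) is unchanged and (iii) becomes the pointwise equality $w\otimes w - z=\tfrac{C_0}{2}\id$. The hypothesis $\tilde v\otimes\tilde v-\tilde u<\tfrac{C_0}{2}\id$ then says precisely that the zero perturbation is a \emph{strict} subsolution at the base point.

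I would introduce
\[
U:=\bigl\{(v,u)\in\R^2\times\Sym_0^{2\times 2}:(\tilde v+v)\otimes(\tilde v+v)-(\tilde u+u)<\tfrac{C_0}{2}\id\bigr\},
\]
and let $X_0$ be the family of smooth $(v,u)$, compactly supported in $\Omega$, satisfying (ii) classically and taking values pointwise in $U$. These functions are uniformly bounded in $L^\infty$, so I close $X_0$ in the weak-$*$ topology to obtain a metrizable space $X$ on which the identity $J:X\to L^2(\Omega;\R^2\times\Sym_0^{2\times 2})$ is of Baire class one. By the Baire category theorem the set of continuity points $\mathcal{C}\subset X$ is residual and therefore uncountable. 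The goal is to show that at every $(v,u)\in\mathcal{C}$ the matricial inequality is almost everywhere saturated on $\Omega$, which simultaneously yields (i)--(iii) and the required infinite family.

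The geometric fact that forces continuity points onto the saturation set is a perturbation property governed by the wave cone
\[
\Lambda:=\bigl\{(\bar v,\bar u)\in\R^2\times\Sym_0^{2\times 2}:\exists\,(\xi,\tau)\neq 0,\;\xi\cdot\bar v=0,\;\tau\bar v+\bar u\xi=0\bigr\},
\]
since any plane wave $h(\xi\cdot x+\tau t)(\bar v,\bar u)$ with $(\bar v,\bar u)\in\Lambda$ lies in $\ker\mathcal{L}$. The key algebraic claim is that for every $(v_0,u_0)\in U$ there exist $(\bar v,\bar u)\in\Lambda$ and $\eta>0$, with $\eta$ bounded below by a continuous function of the matricial gap $\tfrac{C_0}{2}\id-(\tilde v+v_0)\otimes(\tilde v+v_0)+(\tilde u+u_0)$, such that the segment $(v_0,u_0)+s(\bar v,\bar u)$, $s\in[-\eta,\eta]$, remains in $\overline U$. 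Convolving the plane wave with a smooth cut-off and introducing a potential for $\mathcal{L}$ then produces, inside any prescribed weak-$*$ neighbourhood of any element of $X_0$ and localised in any prescribed ball contained in $\Omega$, a compactly supported increment still lying in $X_0$ whose $L^2$-norm on $\Omega$ is uniformly bounded below by a function of the gap. A now-standard Baire argument shows that continuity of $J$ at $(v,u)$ is incompatible with a non-trivial gap, hence $(v,u)\in K$ a.e.

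The main obstacle is precisely the plane-wave step underlying the $\Lambda$-convex-hull claim. Concretely, given a symmetric trace-free $M$ with $\tfrac{C_0}{2}\id-M$ positive definite, I must exhibit $\bar v\in\R^2$, $\xi\in\R^2$ with $\xi\perp\bar v$, and $\tau\in\R$, so that $\bar v\otimes\bar v$ is aligned with an eigenvector of $\tfrac{C_0}{2}\id-M$ and the $\bar u$ forced by $\tau\bar v+\bar u\xi=0$ (suitably symmetrised and de-traced) lies in $\Sym_0^{2\times 2}$, together with a quantitative lower bound on the admissible segment length scaling like the square root of the gap. This is a variant of the plane-wave lemma of \cite{dls2} adapted to the present two-dimensional wave cone; once it is verified the remainder of the proof, including the potential-theoretic construction of the increment, the approximation of $\Omega$ by countably many balls, and the extraction of infinitely many pairs $(\underline v,\underline u)$ from the uncountable set $\mathcal{C}$, proceeds by the standard template of \cite{ChDLKr}.
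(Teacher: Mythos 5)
Your outline follows exactly the route the paper relies on: the paper does not reprove Lemma \ref{l:ci} but defers to \cite[Section 4]{ChDLKr} and \cite{dls2}, and that argument is precisely the Baire-category convex integration scheme you describe (shift by the constant state, wave cone for the linear operator $(\div_x v,\ \partial_t v+\div_x u)$, quantitative perturbation property for strict subsolutions via localized plane waves and a potential, residual set of continuity points of the Baire-one identity map). The plane-wave/segment lemma you single out as the remaining obstacle is indeed the crux, and it is exactly the geometric lemma proved in \cite{dls2} and adapted to this setting in \cite[Section 4]{ChDLKr}, so your plan coincides with the paper's cited proof.
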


Having Lemma \ref{l:ci} at hand, solutions to the Euler equations \eqref{eq:Euler system} 
are created by adding to the single subsolution infinitely many maps as in Lemma \ref{l:ci} in the region $P_1$. 
More precisely we use the Lemma \ref{l:ci} with $\Omega = P_1, (\tilde{v}, \tilde{u}) = (v_1,u_1)$ and $C_0 = C$. 
One can easily check that each couple $(\overline{\rho}, \overline{v} + \underline{v})$ is indeed an admissible weak solution to \eqref{eq:Euler system}.
For a complete proof of Proposition \ref{p:subs}, we refer to \cite[Section 3.3]{ChDLKr}.

The proof of the Lemma \ref{l:ci} can be found in \cite[Section 4]{ChDLKr} and is essentially based on the theory of De Lellis and Sz\'ekelyhidi \cite{dls2} 
for the incompressible Euler system. We will not present the proof here.

As we explained above, our goal is now to find an admissible fan subsolution for Riemann initial data as in Theorem \ref{t:main}. In order to do that, similarly as in \cite{ChKr} we introduce the real numbers 
$\alpha, \beta, \gamma_1, \gamma_2, v_{-1}, v_{-2}, v_{+1}, v_{+2}$ such that
\begin{align} 
v_1 &= (\alpha, \beta),\label{eq:v1}\\
v_- &= (v_{-1}, v_{-2})\\
v_+ &= (v_{+1}, v_{+2})\\
u_1 &=\left( \begin{array}{cc}
    \gamma_1 & \gamma_2 \\
    \gamma_2 & -\gamma_1\\
    \end{array} \right)\, .\label{eq:u1}
\end{align}

As in \cite{ChKr}, we can easily check the existence of a fan subsolution thanks to the following Proposition.

\begin{proposition}\label{p:algebra}
Let $P_-, P_1, P_+$ be a fan partition as in Definition \ref{d:fan}. The constants 
$v_1, v_-, v_+, u_1, \rho_-, \rho_+, \rho_1$ as in \eqref{eq:v1}-\eqref{eq:u1} define an admissible
fan subsolution as in Definitions \ref{d:subs}-\ref{d:admiss} if and only if the following
identities and inequalities hold:
\begin{itemize}
\item Rankine-Hugoniot conditions on the left interface:
\begin{align}
&\nu_- (\rho_- - \rho_1) \, =\,  \rho_- v_{-2} -\rho_1  \beta \label{eq:cont_left}  \\
&\nu_- (\rho_- v_{-1}- \rho_1 \alpha) \, = \, \rho_- v_{-1} v_{-2}- \rho_1 \gamma_2  \label{eq:mom_1_left}\\
&\nu_- (\rho_- v_{-2}- \rho_1 \beta) \, = \,  
\rho_- v_{-2}^2 + \rho_1 \gamma_1 +p (\rho_-)-p (\rho_1) - \rho_1 \frac{C}{2}\, ;\label{eq:mom_2_left}
\end{align}
\item Rankine-Hugoniot conditions on the right interface:
\begin{align}
&\nu_+ (\rho_1-\rho_+ ) \, =\,  \rho_1  \beta - \rho_+ v_{+2} \label{eq:cont_right}\\
&\nu_+ (\rho_1 \alpha- \rho_+ v_{+1}) \, = \, \rho_1 \gamma_2 - \rho_+ v_{+1} v_{+2} \label{eq:mom_1_right}\\
&\nu_+ (\rho_1 \beta- \rho_+ v_{+2}) \, = \, - \rho_1 \gamma_1 - \rho_+ v_{+2}^2 +p (\rho_1) -p (\rho_+) 
+ \rho_1 \frac{C}{2}\, ;\label{eq:mom_2_right}
\end{align}
\item Subsolution condition:
\begin{align}
 &\alpha^2 +\beta^2 < C \label{eq:sub_trace}\\
& \left( \frac{C}{2} -{\alpha}^2 +\gamma_1 \right) \left( \frac{C}{2} -{\beta}^2 -\gamma_1 \right) - 
\left( \gamma_2 - \alpha \beta \right)^2 >0\, ;\label{eq:sub_det}
\end{align}
\item Admissibility condition on the left interface:
\begin{align}
& \nu_-(\rho_- \varepsilon(\rho_-)- \rho_1 \varepsilon( \rho_1))+\nu_- 
\left(\rho_- \frac{\abs{v_-}^2}{2}- \rho_1 \frac{C}{2}\right)\nonumber\\
\leq & \left[(\rho_- \varepsilon(\rho_-)+ p(\rho_-)) v_{-2}- 
( \rho_1 \varepsilon( \rho_1)+ p(\rho_1)) \beta \right] 
+ \left( \rho_- v_{-2} \frac{\abs{v_-}^2}{2}- \rho_1 \beta \frac{C}{2}\right)\, ;\label{eq:E_left}
\end{align}
\item Admissibility condition on the right interface:
\begin{align}
&\nu_+(\rho_1 \varepsilon( \rho_1)- \rho_+ \varepsilon(\rho_+))+\nu_+ 
\left( \rho_1 \frac{C}{2}- \rho_+ \frac{\abs{v_+}^2}{2}\right)\nonumber\\
\leq &\left[ ( \rho_1 \varepsilon( \rho_1)+ p(\rho_1)) \beta- (\rho_+ \varepsilon(\rho_+)+ p(\rho_+)) v_{+2}\right] 
+ \left( \rho_1 \beta \frac{C}{2}- \rho_+ v_{+2} \frac{\abs{v_+}^2}{2}\right)\, .\label{eq:E_right}
\end{align}
\end{itemize}
\end{proposition}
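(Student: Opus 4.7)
The plan is to exploit the piecewise constant structure of $(\bar\rho,\bar v,\bar u)$ on the fan partition $P_-,P_1,P_+$: every distributional identity or inequality appearing in Definitions~\ref{d:subs}--\ref{d:admiss} is automatic in the interior of each of the three open sets (all entries are constant there), and on each of the two rays $\{x_2=\nu_\pm t\}$ it reduces to an algebraic jump condition. The initial datum \eqref{eq:R_data} is already built into the values on $P_\pm$, so nothing further need be imposed at $t=0$, and the entire statement is an equivalence because each reduction runs in both directions.

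\textbf{PDE system and Rankine--Hugoniot.} The co-normal to the left interface $\{x_2=\nu_- t\}$ in the $(x_1,x_2,t)$ variables is proportional to $(0,1,-\nu_-)$. Applied to \eqref{eq:continuity} this yields $\nu_-(\rho_- - \rho_1)=\rho_- v_{-2}-\rho_1\beta$, which is \eqref{eq:cont_left}. The two components of \eqref{eq:momentum} give \eqref{eq:mom_1_left}--\eqref{eq:mom_2_left} once one keeps track of the replacement of $\tfrac12\bar\rho|\bar v|^2\bm{1}_{P_\pm}$ by $\tfrac12 C\rho_1\bm{1}_{P_1}$ inside the pressure-like term, and uses the entry $\gamma_2$ of $u_1$ as the off-diagonal flux. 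The computation on $\{x_2=\nu_+ t\}$ is identical up to swapping the roles of the two sides and yields \eqref{eq:cont_right}--\eqref{eq:mom_2_right}.

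\textbf{Subsolution condition and admissibility.} The matrix inequality \eqref{eq:subsolution 2} is pointwise and only needs to be checked in $P_1$, because on $P_\pm$ the prescribed $u_\pm=v_\pm\otimes v_\pm-\tfrac12|v_\pm|^2\id$ forces $\bar v\otimes\bar v - \bar u=\tfrac12|\bar v|^2\id$, which is absorbed into the extra pressure on $P_\pm$. Using \eqref{eq:v1}--\eqref{eq:u1}, the symmetric $2\times 2$ matrix $\tfrac{C}{2}\id-(v_1\otimes v_1-u_1)$ has trace $C-\alpha^2-\beta^2$ and determinant equal to the left-hand side of \eqref{eq:sub_det}, so strict positive definiteness is equivalent to \eqref{eq:sub_trace}--\eqref{eq:sub_det}. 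Finally, \eqref{eq:admissible subsolution} is an equality away from the two rays and a jump inequality across each of them; writing these out with the same orientation of the co-normal used in the Rankine--Hugoniot step, and once more replacing $\bar\rho|\bar v|^2/2$ by $\rho_1 C/2$ on the $P_1$ side, produces exactly \eqref{eq:E_left}--\eqref{eq:E_right}. The one step requiring genuine care is keeping the sign conventions for each jump consistent with the chosen orientation of the co-normal (so that the entropy jump inequality has the correct direction on each interface); beyond this bookkeeping the argument is a direct computation, and reversing each substitution yields the converse implication.
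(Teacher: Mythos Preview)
Your argument is correct and is exactly the standard reduction: on a piecewise constant ansatz the distributional PDEs and the entropy inequality collapse to Rankine--Hugoniot type jump relations across the two rays $\{x_2=\nu_\pm t\}$, and the pointwise matrix inequality \eqref{eq:subsolution 2} becomes the trace/determinant pair \eqref{eq:sub_trace}--\eqref{eq:sub_det}. Note, however, that the present paper does not actually prove Proposition~\ref{p:algebra}; it is imported verbatim from \cite{ChKr} (see the sentence preceding the statement), so there is no in-paper proof to compare against. Your write-up matches what the cited reference does.
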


The existence of a fan subsolution is then equivalent to the existence of real numbers $\nu_- < \nu_+, \rho_1 > 0, \alpha, \beta, \gamma_1, \gamma_2, C > 0$ solving the set of identities and inequalities \eqref{eq:cont_left}--\eqref{eq:E_right}. We start with the following observation.

\begin{lemma}\label{l:algebra simplified}
Let $v_{-1} = v_{+1}$. Then $\alpha = v_{-1} = v_{+1}$ and $\gamma_2 = \alpha\beta$.
\end{lemma}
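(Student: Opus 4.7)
The plan is to isolate the first-component momentum Rankine--Hugoniot conditions \eqref{eq:mom_1_left} and \eqref{eq:mom_1_right}, and to exploit the continuity conditions \eqref{eq:cont_left} and \eqref{eq:cont_right} to eliminate the $v_{\pm 2}$-terms. First I would set $V := v_{-1} = v_{+1}$ and rewrite \eqref{eq:mom_1_left} as
\begin{equation*}
\rho_- V (\nu_- - v_{-2}) \;=\; \rho_1 (\nu_- \alpha - \gamma_2),
\end{equation*}
and then use \eqref{eq:cont_left} in the form $\rho_- (\nu_- - v_{-2}) = \rho_1 (\nu_- - \beta)$ to replace the left-hand side, obtaining
\begin{equation*}
\nu_- (\alpha - V) \;=\; \gamma_2 - V \beta.
\end{equation*}

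Next I would carry out the completely analogous manipulation on the right interface: from \eqref{eq:mom_1_right} with $v_{+1} = V$ combined with \eqref{eq:cont_right} (which gives $\rho_+ (\nu_+ - v_{+2}) = \rho_1 (\nu_+ - \beta)$) I get
\begin{equation*}
\nu_+ (\alpha - V) \;=\; \gamma_2 - V \beta.
\end{equation*}

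Subtracting the two identities yields $(\nu_+ - \nu_-)(\alpha - V) = 0$, and since by Definition \ref{d:fan} we have $\nu_- < \nu_+$, this forces $\alpha = V = v_{-1} = v_{+1}$. Plugging this back into either of the two identities above gives $\gamma_2 = V\beta = \alpha\beta$, which concludes the proof. The only conceptual step is recognizing that the scalar quantity $\rho_- (\nu_- - v_{-2})$ appearing in the momentum-1 equation is precisely what the continuity equation computes; once this observation is in place, the rest is a purely algebraic two-line manipulation with no real obstacle.
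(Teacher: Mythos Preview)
Your proof is correct. The paper itself does not give a proof but refers to \cite[Lemma 4.2]{ChKr}; your argument---using the continuity relations \eqref{eq:cont_left}, \eqref{eq:cont_right} to rewrite the first-component momentum conditions \eqref{eq:mom_1_left}, \eqref{eq:mom_1_right} as $\nu_\pm(\alpha - V) = \gamma_2 - V\beta$ and then exploiting $\nu_- \neq \nu_+$---is the natural algebraic route and is essentially what one expects the cited proof to do.
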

\begin{proof}
See \cite[Lemma 4.2]{ChKr}
\end{proof}

The set of identities and inequalities from Proposition \ref{p:algebra} then simplifies as follows
\begin{itemize}
\item Rankine-Hugoniot conditions on the left interface:
\begin{align}
&\nu_- (\rho_- - \rho_1) \, =\,  \rho_- v_{-2} -\rho_1  \beta \label{eq:cont_left s}  \\
&\nu_- (\rho_- v_{-2}- \rho_1 \beta) \, = \,  
\rho_- v_{-2}^2 - \rho_1(\frac{C}{2} - \gamma_1) +p (\rho_-)-p (\rho_1) \, ;\label{eq:mom_2_left s}
\end{align}
\item Rankine-Hugoniot conditions on the right interface:
\begin{align}
&\nu_+ (\rho_1-\rho_+ ) \, =\,  \rho_1  \beta - \rho_+ v_{+2} \label{eq:cont_right s}\\
&\nu_+ (\rho_1 \beta- \rho_+ v_{+2}) \, = \, \rho_1 (\frac{C}{2} - \gamma_1) - \rho_+ v_{+2}^2 +p (\rho_1) -p (\rho_+) 
\, ;\label{eq:mom_2_right s}
\end{align}
\item Subsolution condition:
\begin{align}
 &\alpha^2 +\beta^2 < C \label{eq:sub_trace s}\\
& \left( \frac{C}{2} -{\alpha}^2 +\gamma_1 \right) \left( \frac{C}{2} -{\beta}^2 -\gamma_1 \right) >0\, ;\label{eq:sub_det s}
\end{align}
\end{itemize}
with admissibility conditions \eqref{eq:E_left} and \eqref{eq:E_right} same as above and $\alpha = v_{-1} = v_{+1}$. Next we reformulate the conditions for the matrix $u_1 + \frac C2 \id - v_1\otimes v_1$ to be positive definite.

\begin{lemma}\label{l:algebra simplified 2}
 A necessary condition for \eqref{eq:sub_trace s}-\eqref{eq:sub_det s} to be satisfied is $\frac{C}{2} -\gamma_1 > \beta^2$. 
\end{lemma}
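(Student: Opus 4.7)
The plan is to argue by a simple sign analysis on the two factors of the determinant inequality \eqref{eq:sub_det s}. Observe that the sum of the two factors satisfies
\begin{equation*}
\left( \frac{C}{2} - \alpha^2 + \gamma_1 \right) + \left( \frac{C}{2} - \beta^2 - \gamma_1 \right) = C - \alpha^2 - \beta^2,
\end{equation*}
which is strictly positive by the trace condition \eqref{eq:sub_trace s}. Since \eqref{eq:sub_det s} forces the product of these two factors to be positive as well, they must be both positive or both negative; the positivity of their sum rules out the latter option. Hence in particular the second factor is positive, i.e.\ $\frac{C}{2} - \gamma_1 > \beta^2$, which is precisely the claimed necessary condition.

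This is essentially a one-line observation, so I do not expect any genuine obstacle. The only thing to make sure of is that the trace inequality \eqref{eq:sub_trace s} is strict (which it is by definition of a fan subsolution via \eqref{eq:subsolution 2}), so that the sum of the two factors is strictly positive and the "both negative" alternative is actually excluded rather than merely non-strict. The symmetric companion inequality $\frac{C}{2} + \gamma_1 > \alpha^2$ follows by the same argument, though only the one stated in the lemma is needed in the sequel.
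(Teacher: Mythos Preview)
Your argument is correct: the two factors in \eqref{eq:sub_det s} have positive product by hypothesis and positive sum by \eqref{eq:sub_trace s}, so each is positive, and in particular $\frac{C}{2}-\gamma_1-\beta^2>0$. The paper does not actually prove this lemma here but defers to \cite[Lemma~4.3]{ChKr}; your proof is the natural one-line argument and is almost certainly what appears there.
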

\begin{proof}
See \cite[Lemma 4.3]{ChKr}
\end{proof}

Following the strategy developed in \cite{ChKr}, we introduce $\ep_1$ and $\ep_2$ as
\begin{align}
&0 < \ep_1 := \frac{C}{2} - \gamma_1 - \beta^2 \\
&0 < \ep_2 := C-\alpha^2-\beta^2-\ep_1
\end{align}
and further reformulate the set of identities and inequalities as follows.

\begin{lemma} \label{l:final lemma}
 In the case $v_{-1} = v_{+1} = \alpha$ and with $\ep_1$, $\ep_2$ as defined above, the set of algebraic identities and inequalities \eqref{eq:cont_left s}-\eqref{eq:sub_det s} together with \eqref{eq:E_left}-\eqref{eq:E_right} is equivalent to 
\begin{itemize}
\item Rankine-Hugoniot conditions on the left interface:
\begin{align}
&\nu_- (\rho_- - \rho_1) \, =\,  \rho_- v_{-2} -\rho_1  \beta \label{eq:cont_left ss}  \\
&\nu_- (\rho_- v_{-2}- \rho_1 \beta) \, = \,  
\rho_- v_{-2}^2 - \rho_1(\beta^2 + \ep_1) +p (\rho_-)-p (\rho_1) \, ;\label{eq:mom_2_left ss}
\end{align}
\item Rankine-Hugoniot conditions on the right interface:
\begin{align}
&\nu_+ (\rho_1-\rho_+ ) \, =\,  \rho_1  \beta - \rho_+ v_{+2} \label{eq:cont_right ss}\\
&\nu_+ (\rho_1 \beta- \rho_+ v_{+2}) \, = \, \rho_1 (\beta^2 + \ep_1) - \rho_+ v_{+2}^2 +p (\rho_1) -p (\rho_+) 
\, ;\label{eq:mom_2_right ss}
\end{align}
\item Subsolution condition:
\begin{align}
& \ep_1 > 0 \label{eq:sub_1 ss}\\
& \ep_2 > 0\, ;\label{eq:sub_2 ss}
\end{align}
\item Admissibility condition on the left interface:
\begin{align}
&(\beta-v_{-2})\left(p(\rho_-)+p(\rho_1)-2\rho_-\rho_1\frac{\ep(\rho_-)-\ep(\rho_1)}{\rho_--\rho_1}\right) \nonumber\\
\leq &\ep_1\rho_1(v_{-2}+\beta) - (\ep_1+\ep_2)\frac{\rho_-\rho_1(\beta-v_{-2})}{\rho_--\rho_1}\, ;\label{eq:E_left ss}
\end{align}
\item Admissibility condition on the right interface:
\begin{align}
&(v_{+2}-\beta)\left(p(\rho_1)+p(\rho_+)-2\rho_1\rho_+\frac{\ep(\rho_1)-\ep(\rho_+)}{\rho_1-\rho_+}\right) \nonumber\\
\leq &-\ep_1\rho_1(v_{+2}+\beta) + (\ep_1+\ep_2)\frac{\rho_1\rho_+(v_{+2}-\beta)}{\rho_1-\rho_+}\, .\label{eq:E_right ss}
\end{align}
\end{itemize}
\end{lemma}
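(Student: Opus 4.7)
The proof is a direct algebraic verification that uses only the substitutions defining $\ep_1,\ep_2$ together with the simplifications from Lemmas \ref{l:algebra simplified} and \ref{l:algebra simplified 2}. The continuity-type conditions \eqref{eq:cont_left ss} and \eqref{eq:cont_right ss} are literally equal to \eqref{eq:cont_left s} and \eqref{eq:cont_right s}, while the momentum equations \eqref{eq:mom_2_left ss} and \eqref{eq:mom_2_right ss} are obtained from \eqref{eq:mom_2_left s} and \eqref{eq:mom_2_right s} by substituting $\frac{C}{2}-\gamma_1=\beta^2+\ep_1$. So the Rankine-Hugoniot block reduces to a one-line renaming.

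For the subsolution conditions I would exploit that, by Lemma \ref{l:algebra simplified 2}, $\frac{C}{2}-\gamma_1>\beta^2$, which is exactly $\ep_1>0$. The trace condition \eqref{eq:sub_trace s} reads $C-\alpha^2-\beta^2>0$, i.e. $\ep_1+\ep_2>0$. For the determinant condition \eqref{eq:sub_det s}, since $\gamma_2=\alpha\beta$ by Lemma \ref{l:algebra simplified}, the term $(\gamma_2-\alpha\beta)^2$ vanishes and \eqref{eq:sub_det s} factors as
\[
\Big(\tfrac{C}{2}-\alpha^2+\gamma_1\Big)\Big(\tfrac{C}{2}-\beta^2-\gamma_1\Big)>0.
\]
The second factor equals $\ep_1>0$, so the condition becomes $\frac{C}{2}+\gamma_1-\alpha^2>0$, and by the defining relation for $\ep_2$ one checks that $\frac{C}{2}+\gamma_1-\alpha^2=\ep_2$. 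Thus \eqref{eq:sub_trace s}--\eqref{eq:sub_det s} and $\ep_1>0$ are equivalent to $\ep_1>0$, $\ep_2>0$.

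The admissibility inequalities form the only computational step. For the left interface, I would substitute $|v_-|^2=\alpha^2+v_{-2}^2$ and $C=\alpha^2+\beta^2+\ep_1+\ep_2$ into \eqref{eq:E_left}, and eliminate $\nu_-$ via $\nu_-(\rho_--\rho_1)=\rho_- v_{-2}-\rho_1\beta$ from \eqref{eq:cont_left ss}. The crucial observation, which drives the simplification, is that the $\alpha^2$ contributions collect on both sides as $\frac{\alpha^2}{2}\nu_-(\rho_--\rho_1)$ versus $\frac{\alpha^2}{2}(\rho_- v_{-2}-\rho_1\beta)$, so that by \eqref{eq:cont_left ss} they cancel exactly. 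Using \eqref{eq:mom_2_left ss} to express $\nu_-^2(\rho_--\rho_1)$, multiplying through by $\rho_--\rho_1$ and regrouping the pressure and internal-energy terms yields \eqref{eq:E_left ss}. The right-interface case is completely symmetric using \eqref{eq:cont_right ss} and \eqref{eq:mom_2_right ss}.

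The only real obstacle is the bookkeeping in the last step: one must group the $\ep_1$, $\ep_2$ and pressure terms carefully so that the factor $(\beta-v_{-2})$ (respectively $(v_{+2}-\beta)$) can be isolated and so that the average internal-energy expression $p(\rho_-)+p(\rho_1)-2\rho_-\rho_1\frac{\ep(\rho_-)-\ep(\rho_1)}{\rho_--\rho_1}$ emerges. Once these cancellations are tracked, the equivalence is established. The argument is essentially the same as the one performed in \cite[Lemma 4.4]{ChKr}, where the absence of rarefaction waves did not play a role in this algebraic reduction, so we obtain the lemma in exactly the same form.
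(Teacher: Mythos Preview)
Your proposal is correct and follows exactly the approach the paper intends: the paper's own proof consists solely of the reference ``See \cite[Lemma 4.4]{ChKr}'', and your outline reproduces precisely that algebraic reduction (substitute $\frac{C}{2}-\gamma_1=\beta^2+\ep_1$, identify $\ep_2=\frac{C}{2}+\gamma_1-\alpha^2$, cancel the $\alpha^2$ kinetic terms via the continuity relation, and regroup). There is nothing to add or correct.
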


\begin{proof}
 See \cite[Lemma 4.4]{ChKr}
\end{proof}

Let us emphasize that the expressions 
\begin{align}
 P(\rho_-,\rho_1) := \left(p(\rho_-)+p(\rho_1)-2\rho_-\rho_1\frac{\ep(\rho_-)-\ep(\rho_1)}{\rho_--\rho_1}\right) \\
 P(\rho_1,\rho_+) := \left(p(\rho_1)+p(\rho_+)-2\rho_1\rho_+\frac{\ep(\rho_1)-\ep(\rho_+)}{\rho_1-\rho_+}\right)
\end{align}
appearing on the left hand sides of \eqref{eq:E_left ss} and \eqref{eq:E_right ss} are both positive for $p(\rho) = \rho^\gamma$ with $\gamma \geq 1$ as a consequence of \cite[Lemma 2.1]{ChKr}.

\section{Proof}

After reducing the existence of a fan subsolution to Lemma \ref{l:final lemma} we are now ready to prove Theorem \ref{t:main}.

Recall that the quantities $\rho_\pm, v_{\pm 2}$ are considered to be given as the initial data. 
Therefore the system of relations \eqref{eq:cont_left ss}--\eqref{eq:E_right ss} consists of 4 equations and 4 
inequalities for 6 unknowns $\nu_\pm, \rho_1, \beta, \ep_1, \ep_2$, with $\ep_2$ appearing only in the inequalities. 
Similarly as in \cite{ChKr} we choose $\rho_1$ as a parameter and using the equations \eqref{eq:cont_left ss}--\eqref{eq:mom_2_right ss} 
we express $\nu_\pm, \beta$ and $\ep_1$ in terms of the initial data and of the chosen parameter $\rho_1$.

We use the following notation for functions of initial data
\begin{align}
 R &:= \rho_--\rho_+ \label{eq:R}\\
 A &:= \rho_-v_{-2}-\rho_+v_{+2} \label{eq:A}\\
 H &:= \rho_-v_{-2}^2-\rho_+v_{+2}^2 + p(\rho_-) - p(\rho_+). \label{eq:H} \\
 u &:= v_{+2} - v_{-2} \label{eq:u} \\
 B &:= A^2 - RH = \rho_-\rho_+ u^2 - (\rho_+ - \rho_-)(p(\rho_+) - p(\rho_-)) \label{eq:B}
\end{align}
and recall that we study the problem with 
\begin{equation}\label{eq:condition}
v_{-2} - v_{+2} < \sqrt{\frac{(\rho_+ - \rho_-)(p(\rho_+) - p(\rho_-))}{\rho_+\rho_-}}
\end{equation}
which translates into $B < 0$. 

\subsection{The case $\mathbf{R < 0}$}

Let us first assume that $\rho_+ > \rho_-$, i.e. $R < 0$.

Following the calculations of \cite[Section 4]{ChKr}, we recover
\begin{equation}\label{eq:num 1}
 \nu_- = \frac{A - \nu_+(\rho_1-\rho_+)}{\rho_--\rho_1}.
\end{equation}
and
\begin{equation}\label{eq:blabla12}
\nu_+ = \frac{A}{R} \pm \frac{1}{R}\sqrt{B\frac{\rho_1-\rho_-}{\rho_1-\rho_+}}.
\end{equation}
with the correct sign chosen such that $\nu_- < \nu_+$. Since we assume $B < 0$, the necessary condition to follow is $\rho_1 \in (\rho_-,\rho_+)$ and we find that
\begin{align}
 \nu_- = \frac{A}{R} + \frac{\sqrt{-B}}{R}\sqrt{\frac{\rho_+-\rho_1}{\rho_1-\rho_-}} \label{eq:num}\\
 \nu_+ = \frac{A}{R} - \frac{\sqrt{-B}}{R}\sqrt{\frac{\rho_1-\rho_-}{\rho_+-\rho_1}} \label{eq:nup}.
\end{align}

Further we express $\beta$ from \eqref{eq:cont_right ss} as 
\begin{equation}\label{eq:beta}
 \beta = \frac{\rho_+v_{+2}}{\rho_1} - \frac{(\rho_+-\rho_1)A}{R\rho_1} + \frac{\sqrt{-B}}{R\rho_1}\sqrt{(\rho_1-\rho_-)(\rho_+-\rho_1)}
\end{equation}
and finally we use \eqref{eq:mom_2_right ss} to express $\ep_1$ as a function of $\rho_1$ and of the initial data. We have
\begin{equation}\label{eq:ep1}
 \ep_1(\rho_1) = \frac{p(\rho_+) - p(\rho_1)}{\rho_1} - \frac{\rho_+(\rho_+-\rho_1)}{\rho_1^2}(\nu_+ - v_{+2})^2
\end{equation}
and further plugging in \eqref{eq:nup} we get
\begin{equation}\label{eq:ep10}
 \ep_1(\rho_1) = \frac{p(\rho_+) - p(\rho_1)}{\rho_1} - \frac{\rho_+}{\rho_1}\left(\frac{\sqrt{-B}}{-R}\sqrt{1-\frac{\rho_-}{\rho_1}} - \frac{\rho_- u}{R}\sqrt{\frac{\rho_+}{\rho_1}-1}\right)^2
\end{equation}
For simplicity we further denote
\begin{align}
 K &:= \frac{\rho_- u}{R} \label{eq:K}\\
 L &:= \frac{\sqrt{-B}}{-R} \label{eq:L}
\end{align}
and recall that both $K,L > 0$. Thus we have
\begin{equation}\label{eq:ep11}
 \ep_1(\rho_1) = \frac{p(\rho_+) - p(\rho_1)}{\rho_1} - \frac{\rho_+}{\rho_1}\left(L\sqrt{1-\frac{\rho_-}{\rho_1}} - K\sqrt{\frac{\rho_+}{\rho_1}-1}\right)^2
\end{equation}

\begin{lemma}\label{l:ep1 decreasing}
 There exists a unique $\overline{\rho}$ such that 
 \begin{align}
  &\ep_1 > 0 \qquad \text{ for } \quad \rho_1 \in (\rho_-,\overline{\rho}) \\
  &\ep_1 < 0 \qquad \text{ for } \quad \rho_1 \in (\overline{\rho},\rho_+).
 \end{align}
 Moreover, $\overline{\rho} \rightarrow \rho_+$ as $u \rightarrow - \sqrt{\frac{(\rho_+ - \rho_-)(p(\rho_+) - p(\rho_-))}{\rho_+\rho_-}}$
\end{lemma}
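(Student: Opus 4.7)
The plan is to introduce the auxiliary function appearing inside the square in \eqref{eq:ep11},
\[ f(\rho_1):=L\sqrt{1-\rho_-/\rho_1}-K\sqrt{\rho_+/\rho_1-1}, \]
and to split the interval $(\rho_-,\rho_+)$ at its unique zero. Since $f$ is strictly increasing (both terms have monotone behaviour of definite sign) and $f(\rho_-)<0<f(\rho_+)$, squaring the equation $f=0$ gives the explicit value $\rho_\ast=(K^2\rho_++L^2\rho_-)/(K^2+L^2)\in(\rho_-,\rho_+)$. A direct computation from \eqref{eq:ep11} together with \eqref{eq:K}--\eqref{eq:L} yields the boundary values $\ep_1(\rho_-)=-B/[\rho_-(\rho_+-\rho_-)]>0$ and $\ep_1(\rho_+)=-L^2(\rho_+-\rho_-)/\rho_+<0$.

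First I would show $\ep_1>0$ on $(\rho_-,\rho_\ast]$. On this subinterval $f\le 0$, so $|f|=K\sqrt{(\rho_+-\rho_1)/\rho_1}-L\sqrt{(\rho_1-\rho_-)/\rho_1}\le K\sqrt{(\rho_+-\rho_1)/\rho_1}$, giving $\rho_+f^2\le K^2\rho_+(\rho_+-\rho_1)/\rho_1$. It is therefore enough to prove strict positivity of the majorant
\[ g(\rho_1):=p(\rho_+)-p(\rho_1)-K^2\rho_+(\rho_+-\rho_1)/\rho_1 \]
on $[\rho_-,\rho_+)$. A quick computation gives $g''(\rho_1)=-p''(\rho_1)-2K^2\rho_+^2/\rho_1^3<0$, so $g$ is strictly concave, and combined with $g(\rho_-)=-B/(\rho_+-\rho_-)>0$ and $g(\rho_+)=0$ strict concavity forces the graph of $g$ strictly above its chord, i.e.\ $g>0$ on $[\rho_-,\rho_+)$. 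Chaining the inequalities yields $\ep_1>0$ on $(\rho_-,\rho_\ast)$, and at $\rho_\ast$ itself $f=0$ gives $\ep_1(\rho_\ast)=(p(\rho_+)-p(\rho_\ast))/\rho_\ast>0$.

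For uniqueness of the zero on the remaining subinterval $(\rho_\ast,\rho_+)$ I would use a derivative-at-zero argument. Differentiating $\rho_1\ep_1=p(\rho_+)-p(\rho_1)-\rho_+f^2$ gives
\[ \rho_1\ep_1'(\rho_1)=-\ep_1(\rho_1)-p'(\rho_1)-2\rho_+f(\rho_1)f'(\rho_1). \]
On $(\rho_\ast,\rho_+)$ both $f>0$ and $f'>0$, so at any zero $\overline{\rho}$ of $\ep_1$ in this subinterval $\overline{\rho}\,\ep_1'(\overline{\rho})=-p'(\overline{\rho})-2\rho_+ff'<0$. Thus $\ep_1$ crosses zero strictly downward at each of its zeros, which, combined with the sign change from $\ep_1(\rho_\ast)>0$ to $\ep_1(\rho_+)<0$ and the intermediate value theorem, forces exactly one zero $\overline{\rho}\in(\rho_\ast,\rho_+)$ and delivers the announced sign behaviour.

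The limit assertion is then immediate from the formula for $\rho_\ast$: as $u\to-\sqrt{(\rho_+-\rho_-)(p(\rho_+)-p(\rho_-))/(\rho_+\rho_-)}$ one has $B\to 0^-$, hence $L\to 0^+$ while $K\to K_0>0$, so $\rho_\ast\to\rho_+$, and the sandwich $\rho_\ast<\overline{\rho}<\rho_+$ forces $\overline{\rho}\to\rho_+$. I expect the main obstacle to be exactly the sign analysis on $(\rho_-,\rho_\ast]$: a direct attempt at global monotonicity of $\ep_1$ is doomed because $\ep_1'$ is not single-signed on $(\rho_-,\rho_+)$ (in fact $\ep_1'(\rho_1)\to+\infty$ as $\rho_1\to\rho_-^+$ because $f'$ blows up there while $f(\rho_-)<0$), so the concavity majorant $g$ is the key device needed to bypass the non-monotone region.
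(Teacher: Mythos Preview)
Your proof is correct and follows the same overall skeleton as the paper's: split at the unique zero $\rho_\ast$ of the inner expression (the paper calls it $\tilde\rho$), show positivity of $\ep_1$ on the left subinterval, uniqueness of the zero on the right subinterval, and then the sandwich $\rho_\ast<\overline\rho<\rho_+$ for the limit. The limit argument and the treatment on $(\rho_\ast,\rho_+)$ are essentially identical to the paper's (both boil down to $\tilde\ep_1'=-p'-2\rho_+ff'<0$ there; the paper phrases it as monotonicity of $\ep_1$, you phrase it as ``derivative negative at every zero'').

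The genuine difference is in how you establish $\ep_1>0$ on $(\rho_-,\rho_\ast]$. The paper works directly with $\tilde\ep_1=\rho_1\ep_1$, computes its second derivative explicitly, and verifies concavity on $(\rho_-,\tilde\rho)$ by a term--by--term sign analysis that requires a substitution $x=\sqrt{(\rho_+-\rho_1)/(\rho_1-\rho_-)}$ and a small algebraic argument to handle the mixed $KL$ term. You instead exploit the trivial bound $|f|\le K\sqrt{(\rho_+-\rho_1)/\rho_1}$ (valid where $f\le 0$) to replace $\rho_1\ep_1$ by the minorant $g(\rho_1)=p(\rho_+)-p(\rho_1)-K^2\rho_+(\rho_+-\rho_1)/\rho_1$, whose concavity is a one--line computation. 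This is a cleaner and more elementary route: it avoids the full second--derivative expansion and the sign discussion of the cross term, at the cost of proving a slightly weaker (but sufficient) statement, namely positivity rather than concavity of $\tilde\ep_1$ itself. Your closing remark about why global monotonicity of $\ep_1$ fails is also on point and explains why some such device is needed.
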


\begin{proof}
First observe that $\ep_1(\rho_-) >0$. Indeed we have
\begin{equation*}
\ep_1(\rho_-) = \frac{p(\rho_+) - p(\rho_-)}{\rho_-} - K^2\frac{\rho_+(\rho_+-\rho_-)}{\rho_-^2} = \frac{p(\rho_+) - p(\rho_-)}{\rho_-} - \frac{\rho_+u^2}{\rho_+-\rho_-}
\end{equation*}
and this expression is positive due to \eqref{eq:condition}. Next it is easy to see that $\ep_1(\rho_+) < 0$.

We denote 
\begin{equation}
\tilde{\rho} := \frac{K^2\rho_+ + L^2\rho_-}{K^2+L^2} \in (\rho_-,\rho_+),
\end{equation}
i.e. $\tilde{\rho}$ is the zero of the expression $L\sqrt{1-\frac{\rho_-}{\rho_1}} - K\sqrt{\frac{\rho_+}{\rho_1}-1}$. Obviously $\ep_1(\tilde{\rho}) > 0$. It is easy to observe that the function $\ep_1(\rho_1)$ is decreasing on the interval $(\tilde{\rho},\rho_+)$ thus yielding the existence of a single zero of $\ep_1$ on the interval $(\tilde{\rho},\rho_+)$ which is indeed the $\overline{\rho}$ claimed in the Lemma.

Our final goal is thus to ensure that there are no zeros of $\ep_1(\rho_1)$ on the interval $(\rho_-,\tilde{\rho})$. This is equivalent to say that there are no zeros of the function $\tilde{\ep_1}(\rho_1) = \rho_1\ep_1(\rho_1)$ on this interval. For this purpose it is enough to show that $\tilde{\ep_1}(\rho_1)$ is a concave function on $(\rho_-,\tilde{\rho})$.

We have
\[
\tilde{\ep_1}(\rho_1) = p(\rho_+) - p(\rho_1) - \rho_+\left(L\sqrt{1-\frac{\rho_-}{\rho_1}} - K\sqrt{\frac{\rho_+}{\rho_1}-1}\right)^2
\]
\[
\tilde{\ep_1}'(\rho_1) = -p'(\rho_1) + \rho_+\rho_1^{-2}\left(K^2\rho_+-L^2\rho_- + KL\left(\rho_-\sqrt{\frac{\rho_+-\rho_1}{\rho_1-\rho_-}} - \rho_+\sqrt{\frac{\rho_1-\rho_-}{\rho_+-\rho_1}}\right)\right)
\]
and finally
\begin{align}
\nonumber \tilde{\ep_1}''(\rho_1) &= -p''(\rho_1) -2\rho_+\rho_1^{-3}\left(K^2\rho_+-L^2\rho_- + KL\left(\rho_-\sqrt{\frac{\rho_+-\rho_1}{\rho_1-\rho_-}} - \rho_+\sqrt{\frac{\rho_1-\rho_-}{\rho_+-\rho_1}}\right)\right) \\
 &+ \frac{\rho_+KL}{2\rho_1^2}\left(-\frac{\rho_-(\rho_+-\rho_-)}{(\rho_1-\rho_-)^2}\sqrt{\frac{\rho_1-\rho_-}{\rho_+-\rho_1}} - \frac{\rho_+(\rho_+-\rho_-)}{(\rho_+-\rho_1)^2}\sqrt{\frac{\rho_+-\rho_1}{\rho_1-\rho_-}}\right). 
\end{align}
The first and third terms on the right hand side are clearly nonpositive, so to conclude our proof we need to show that 
\[
K^2\rho_+-L^2\rho_- + KL\left(\rho_-\sqrt{\frac{\rho_+-\rho_1}{\rho_1-\rho_-}} - \rho_+\sqrt{\frac{\rho_1-\rho_-}{\rho_+-\rho_1}}\right) > 0
\]
for $\rho_1 \in (\rho_-,\tilde{\rho})$. Denoting
\[
x = \sqrt{\frac{\rho_+-\rho_1}{\rho_1-\rho_-}} \geq 0
\]
we search where  
\[
K^2\rho_+-L^2\rho_- + KL\rho_-x - KL\rho_+x^{-1} > 0.
\]
Simple computations yield that this is satisfied for $x > L/K$ which means $\rho_1 < \tilde{\rho}$. The proof of the first claim of Lemma \ref{l:ep1 decreasing} is complete. 

In order to prove the second claim we observe that $L \rightarrow 0$ and $\ep_1(\rho_+) \rightarrow 0$ as $u \rightarrow - \sqrt{\frac{(\rho_+ - \rho_-)(p(\rho_+) - p(\rho_-))}{\rho_+\rho_-}}$. In particular also $\tilde{\rho} \rightarrow \rho_+$ and since obviously $\tilde{\rho} < \overline{\rho} < \rho_+$ we conclude that $\overline{\rho} \rightarrow \rho_+$.
\end{proof}
\\

It remains to study the admissibility conditions. Let us denote 
\begin{equation}\label{eq:P def}
 P(r,s) := p(r)+p(s)-2rs\frac{\ep(r)-\ep(s)}{r-s}
\end{equation}
and thus the admissibility conditions \eqref{eq:E_left ss}--\eqref{eq:E_right ss} can be rewritten as follows
\begin{align}
(\beta-v_{-2})P(\rho_-,\rho_1) &\leq \ep_1\rho_1(v_{-2}+\beta) - (\ep_1+\ep_2)\frac{\rho_-\rho_1(\beta-v_{-2})}{\rho_--\rho_1} \label{eq:AD1}\\ 
(v_{+2}-\beta)P(\rho_1,\rho_+) &\leq -\ep_1\rho_1(v_{+2}+\beta) + (\ep_1+\ep_2)\frac{\rho_1\rho_+(v_{+2}-\beta)}{\rho_1-\rho_+}.\label{eq:AD2}
\end{align}

To prove the main theorem, we will now rewrite some of the above expressions in a different way. We denote
\begin{equation}\label{eq:T}
T = \frac{(\rho_+-\rho_-)(p(\rho_+)-p(\rho_-))}{\rho_+\rho_-}
\end{equation}
and recall that we are interested in the cases where $u+\sqrt{T} > 0$. In particular we will study the limits as $u \rightarrow -\sqrt{T}$.

We have
\[
B = \rho_-\rho_+(u^2-T) = \rho_-\rho_+(u-\sqrt{T})(u+\sqrt{T}),
\]
where the middle term is negative. Rewriting \eqref{eq:num} and \eqref{eq:nup} we get
\begin{align}
\nu_- = v_{+2} + \frac{\rho_-\sqrt{T}}{R} - \frac{\rho_-}{R}(u+\sqrt{T}) + \sqrt{u+\sqrt{T}}\frac{\sqrt{(\sqrt{T}-u)\rho_-\rho_+}}{R}\sqrt{\frac{\rho_+-\rho_1}{\rho_1-\rho_-}} \label{eq:num2}\\
\nu_+ = v_{+2} + \frac{\rho_-\sqrt{T}}{R} - \frac{\rho_-}{R}(u+\sqrt{T}) - \sqrt{u+\sqrt{T}}\frac{\sqrt{(\sqrt{T}-u)\rho_-\rho_+}}{R}\sqrt{\frac{\rho_1-\rho_-}{\rho_+-\rho_1}} \label{eq:nup2}.
\end{align}
In particular we observe that for any positive $u+\sqrt{T}$ we have $\nu_- < \nu_+$, but both $\nu_-,\nu_+$ have the same limit as $u \rightarrow -\sqrt{T}$.

Next we reformulate the expression \eqref{eq:beta}. We have
\begin{align}\label{eq:beta2}
 \beta &= v_{+2} + \frac{\rho_-(\rho_1-\rho_+)\sqrt{T}}{R\rho_1} - (u+\sqrt{T})\frac{\rho_-(\rho_1-\rho_+)}{R\rho_1} \\ \nonumber
 &+ \sqrt{u+\sqrt{T}}\frac{\sqrt{(\sqrt{T}-u)\rho_-\rho_+}}{R\rho_1}\sqrt{(\rho_1-\rho_-)(\rho_+-\rho_1)}
\end{align}
and denoting $\overline{\beta}$ = $\lim_{u\rightarrow-\sqrt{T}} \beta$ we observe
\begin{equation}\label{eq:betabar}
 \overline{\beta} = v_{+2} + \frac{\rho_-(\rho_1-\rho_+)\sqrt{T}}{R\rho_1}
\end{equation}

We will not rewrite in detail the expression \eqref{eq:ep11} for $\ep_1$ and instead we introduce $\overline{\ep_1} = \lim_{u\rightarrow-\sqrt{T}} \ep_1$
\begin{equation}\label{eq:epbar}
\overline{\ep_1} = \frac{p(\rho_+) - p(\rho_1)}{\rho_1} - \frac{\rho_+\rho_-^2(\rho_+-\rho_1)T}{\rho_1^2 R^2}
\end{equation}
Using Lemma \ref{l:ep1 decreasing} we have that $\overline{\ep_1}(\rho_-) = \overline{\ep_1}(\rho_+) = 0$ and $\overline{\ep_1} > 0$ for all $\rho_1 \in (\rho_-,\rho_+)$.

In order to examine the admissibility inequalities we first study the signs of the expressions $\beta - v_{-2}$ and $v_{+2}-\beta$. Recalling $R < 0$ we see from \eqref{eq:betabar} that 
\begin{equation}\label{eq:beta_vp}
v_{+2}-\overline{\beta} = - \frac{\rho_-(\rho_1-\rho_+)\sqrt{T}}{R\rho_1},
\end{equation}
so it is negative for $\rho_1 \in (\rho_-,\rho_+)$ with zero value in $\rho_1 = \rho_+$ and a strictly negative value in $\rho_1 = \rho_-$. 
Using a continuity argument we conclude $v_{+2}-\beta < 0$ at least on some interval $(\rho_-,\rho_+-\varepsilon)$ with $\varepsilon$ small for $u$ close to $-\sqrt{T}$.

Similarly we have 
\begin{equation}\label{eq:beta_vm}
\overline{\beta} - v_{-2} = \frac{\rho_+(\rho_1-\rho_-)\sqrt{T}}{R\rho_1}
\end{equation}
and again, this expression is negative for $\rho_1 \in (\rho_-,\rho_+)$ with zero value in $\rho_1 = \rho_+$ and a strictly negative value in $\rho_1 = \rho_-$. 
Thus we conclude $\beta - v_{-2} < 0$ at least on some interval $(\rho_-+\varepsilon,\rho_+)$ with $\varepsilon$ small for $u$ close to $-\sqrt{T}$.

Using this information we reformulate \eqref{eq:AD1} and \eqref{eq:AD2} as 
\begin{align}
\ep_2 &\leq \frac{\rho_1-\rho_-}{\rho_1\rho_-}P(\rho_-,\rho_1) - \ep_1\rho_1 \frac{\beta + v_{-2}}{\beta - v_{-2}}\frac{\rho_1-\rho_-}{\rho_1\rho_-} - \ep_1 := M_1 \label{eq:AD11}\\ 
\ep_2 &\geq -\frac{\rho_+-\rho_1}{\rho_+\rho_1}P(\rho_1,\rho_+) -\ep_1\rho_1\frac{v_{+2}+\beta}{v_{+2}-\beta}\frac{\rho_+-\rho_1}{\rho_+\rho_1} - \ep_1 := M_2.\label{eq:AD21}
\end{align}

To finish our proof we proceed as follows. We show first that in the limit $u \rightarrow -\sqrt{T}$ we have $M_1 > M_2$ for all $\rho_1 \in (\rho_-,\rho_+)$ and then that we can find some $s \in (\rho_-,\rho_+)$ for which $M_1(s) > 0$. By a continuity argument we then conclude that at least for $u$ sufficiently close to $-\sqrt{T}$ we will still have $v_{+2} - \beta(s) < 0$, $\beta(s) - v_{-2} < 0$, $M_1(s) > M_2(s)$ and $M_1(s) > 0$, thus there exist $\ep_2$ satisfying both \eqref{eq:AD11} and \eqref{eq:AD21} while $\ep_1(s) > 0$.

Let us now therefore first express $\overline{M_1} = \lim_{u\rightarrow-\sqrt{T}} M_1$. In order to do this we start with $\frac{\overline{\beta} + v_{-2}}{\overline{\beta} - v_{-2}}$. We have

\begin{align*}
\frac{\overline{\beta} + v_{-2}}{\overline{\beta} - v_{-2}} &= \frac{v_{+2} + v_{-2} + \frac{\rho_-\sqrt{T}(\rho_1-\rho_+)}{\rho_1R}}{v_{+2} - v_{-2} + \frac{\rho_-\sqrt{T}(\rho_1-\rho_+)}{\rho_1R}} = \frac{2v_{+2} + \sqrt{T}\left(\frac{2\rho_--\rho_+}{R} - \frac{\rho_-\rho_+}{\rho_1R}\right)}{\sqrt{T}\frac{\rho_+(\rho_1-\rho_-)}{\rho_1R}} \\
&= \frac{2v_{+2}\rho_1R}{\sqrt{T}\rho_+(\rho_1-\rho_-)} + \frac{(2\rho_--\rho_+)\rho_1 - \rho_-\rho_+}{\rho_+(\rho_1-\rho_-)}
\end{align*}
Plugging this into \eqref{eq:AD11} we obtain
\begin{equation}\label{eq:AD12}
\ep_2 \leq \frac{\rho_1-\rho_-}{\rho_1\rho_-}P(\rho_-,\rho_1) - \frac{\overline{\ep_1}\rho_1}{\rho_-\rho_+}\left(2\rho_- - \rho_+ + \frac{2Rv_{+2}}{\sqrt{T}}\right) = \overline{M_1}
\end{equation}
with $\overline{\ep_1}$ given by \eqref{eq:epbar}. Similarly we handle $\overline{M_2} = \lim_{u\rightarrow-\sqrt{T}} M_2$. We have
\begin{equation*}
\frac{v_{+2} + \overline{\beta}}{v_{+2} - \overline{\beta}} = \frac{2v_{+2} + \frac{\rho_-\sqrt{T}(\rho_1-\rho_+)}{\rho_1R}}{- \frac{\rho_-\sqrt{T}(\rho_1-\rho_+)}{\rho_1R}} = -1 - \frac{2v_{+2}\rho_1R}{\sqrt{T}\rho_-(\rho_1-\rho_+)}
\end{equation*}
and inserting this into \eqref{eq:AD21} we get
\begin{equation}\label{eq:AD22}
\ep_2 \geq -\frac{\rho_+-\rho_1}{\rho_+\rho_1}P(\rho_1,\rho_+) -\frac{\overline{\ep_1}\rho_1}{\rho_-\rho_+}\left(\rho_- + \frac{2Rv_{+2}}{\sqrt{T}}\right) = \overline{M_2}
\end{equation}

Now it is easy to show that $\overline{M_1} > \overline{M_2}$ just by comparing the two expressions as we have
\[
\frac{\rho_1-\rho_-}{\rho_1\rho_-}P(\rho_-,\rho_1) + \frac{\rho_+-\rho_1}{\rho_+\rho_1}P(\rho_1,\rho_+) > \frac{\overline{\ep_1}\rho_1}{\rho_-\rho_+}\left(\rho_--\rho_+\right),
\]
which obviously holds for all $\rho_1 \in (\rho_-,\rho_+)$ since the left hand side is positive and the right hand side is negative. This means we can always find $\ep_2$ satisfying both \eqref{eq:AD12} and \eqref{eq:AD22}.

Next we have to assure that such $\ep_2$ can be chosen positive. 
This, however, follows from the fact that $\overline{M_1}(\rho_+) = \frac{\rho_+-\rho_-}{\rho_+\rho_-}P(\rho_-,\rho_+) > 0$. In particular, we can always find $s < \rho_+$ such that $\overline{M_1}(s) > \frac{\rho_+-\rho_-}{2\rho_+\rho_-}P(\rho_-,\rho_+)$ and then use the continuity argument to show that for $u$ sufficiently close to $-\sqrt{T}$ there exists an admissible fan subsolution and therefore also infinitely many admissible weak solutions to the Euler equations \eqref{eq:Euler system}.

\subsection{The case $\mathbf{ R > 0}$}

Now let us treat the case $R > 0$, i.e. $\rho_- > \rho_+$. Note that in the case $\rho_- = \rho_+$ the self-similar solution to the Riemann problem can consist only of two rarefaction waves or of two admissible shocks, in particular it is not possible for the self-similar solution to consist of one shock and one rarefaction wave.

The case $R > 0$ is on one hand quite similar to the case $R < 0$, on the other hand we have to use different equations to obtain the same result, therefore we emphasize here how to proceed in this case.

First, the expressions for $\nu_+$ and $\nu_-$ have to be modified in order to obtain $\nu_- < \nu_+$. Instead of \eqref{eq:num}-\eqref{eq:nup} we now have
\begin{align}
 \nu_- = \frac{A}{R} - \frac{\sqrt{-B}}{R}\sqrt{\frac{\rho_1-\rho_+}{\rho_--\rho_1}} \label{eq:num22}\\
 \nu_+ = \frac{A}{R} + \frac{\sqrt{-B}}{R}\sqrt{\frac{\rho_--\rho_1}{\rho_1-\rho_+}} \label{eq:nup22}.
\end{align}
In order to express $\beta$ and $\ep_1$ we now choose to work rather with the equation on the left interface \eqref{eq:cont_left ss}-\eqref{eq:mom_2_left ss} than on the right interface \eqref{eq:cont_right ss}-\eqref{eq:mom_2_right ss}. Thus we obtain the expression for $\beta$ as
\begin{equation}\label{eq:beta22}
 \beta = \frac{\rho_-v_{-2}}{\rho_1} - \frac{(\rho_--\rho_1)A}{R\rho_1} + \frac{\sqrt{-B}}{R\rho_1}\sqrt{(\rho_--\rho_1)(\rho_1-\rho_+)}.
\end{equation}
and next 
\begin{equation}\label{eq:ep122}
 \ep_1(\rho_1) = \frac{p(\rho_-) - p(\rho_1)}{\rho_1} - \frac{\rho_-(\rho_--\rho_1)}{\rho_1^2}(\nu_- - v_{-2})^2,
\end{equation}
and consequently using \eqref{eq:num22}
\begin{equation}\label{eq:ep1022}
 \ep_1(\rho_1) = \frac{p(\rho_-) - p(\rho_1)}{\rho_1} - \frac{\rho_-}{\rho_1}\left(\frac{\sqrt{-B}}{R}\sqrt{1-\frac{\rho_+}{\rho_1}} + \frac{\rho_+ u}{R}\sqrt{\frac{\rho_-}{\rho_1}-1}\right)^2.
\end{equation}
Now, similarly as in the case $\rho_- < \rho_+$, we can denote 
\begin{align}
 K &:= \frac{\rho_+ u}{-R} \label{eq:K2}\\
 L &:= \frac{\sqrt{-B}}{R} \label{eq:L2}
\end{align}
so that both $K,L > 0$ and obtain
\begin{equation}\label{eq:ep1122}
 \ep_1(\rho_1) = \frac{p(\rho_-) - p(\rho_1)}{\rho_1} - \frac{\rho_-}{\rho_1}\left(L\sqrt{1-\frac{\rho_+}{\rho_1}} - K\sqrt{\frac{\rho_-}{\rho_1}-1}\right)^2.
\end{equation}
Notice that expression \eqref{eq:ep1122} is the same as \eqref{eq:ep11} just with switched indices $+$ and $-$. In particular, we can use the proof of Lemma \ref{l:ep1 decreasing} to deduce that in the case $\rho_- > \rho_+$ we have
\begin{lemma}\label{l:ep1 decreasing2}
 There exists a unique $\overline{\rho}$ such that 
 \begin{align}
  &\ep_1 > 0 \qquad \text{ for } \quad \rho_1 \in (\rho_+,\overline{\rho}) \\
  &\ep_1 < 0 \qquad \text{ for } \quad \rho_1 \in (\overline{\rho},\rho_-).
 \end{align}
 Moreover, $\overline{\rho} \rightarrow \rho_-$ as $u \rightarrow - \sqrt{\frac{(\rho_- - \rho_+)(p(\rho_-) - p(\rho_+))}{\rho_+\rho_-}}$.
\end{lemma}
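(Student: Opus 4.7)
The plan is to exploit the symmetry already pointed out in the excerpt: the expression \eqref{eq:ep1122} for $\ep_1$ in the case $R>0$ is obtained from \eqref{eq:ep11} by swapping the roles of the indices $+$ and $-$, and the redefinitions \eqref{eq:K2}--\eqref{eq:L2} again make both $K$ and $L$ positive. Accordingly I would essentially transcribe the proof of Lemma \ref{l:ep1 decreasing}, verifying at each step that swapping $\rho_-\leftrightarrow\rho_+$ does not introduce any new sign obstruction.

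The first step is to evaluate the endpoints. A direct computation using \eqref{eq:ep1122} gives
\begin{equation*}
\ep_1(\rho_+) \;=\; \frac{p(\rho_-)-p(\rho_+)}{\rho_+} - K^2\frac{\rho_-(\rho_--\rho_+)}{\rho_+^2} \;=\; \frac{p(\rho_-)-p(\rho_+)}{\rho_+} - \frac{\rho_- u^2}{\rho_--\rho_+},
\end{equation*}
which is positive by the assumption $u^2<T$ (the analogue of \eqref{eq:condition} with the new sign of $R$), while $\ep_1(\rho_-)<0$ is immediate from the negative term in \eqref{eq:ep1122} evaluated at $\rho_1=\rho_-$. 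Setting
\begin{equation*}
\tilde\rho \;:=\; \frac{K^2\rho_- + L^2\rho_+}{K^2+L^2}\in(\rho_+,\rho_-),
\end{equation*}
one sees that $L\sqrt{1-\rho_+/\rho_1}-K\sqrt{\rho_-/\rho_1-1}$ vanishes exactly at $\rho_1=\tilde\rho$, hence $\ep_1(\tilde\rho)>0$.

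Next I would show that $\ep_1$ is monotone decreasing on $(\tilde\rho,\rho_-)$, which combined with the sign information above immediately gives a unique root $\overline\rho$ of $\ep_1$ in that interval. To rule out further zeros in $(\rho_+,\tilde\rho)$ I would mimic the concavity argument from Lemma \ref{l:ep1 decreasing}: set $\tilde\ep_1(\rho_1):=\rho_1\ep_1(\rho_1)$, compute $\tilde\ep_1''$, and observe that $-p''(\rho_1)\le 0$ and the contribution from the square-root terms is nonpositive. The only delicate term is the one analogous to
\begin{equation*}
K^2\rho_- - L^2\rho_+ + KL\Bigl(\rho_+\sqrt{\tfrac{\rho_--\rho_1}{\rho_1-\rho_+}} - \rho_-\sqrt{\tfrac{\rho_1-\rho_+}{\rho_--\rho_1}}\Bigr),
\end{equation*}
and setting $x=\sqrt{(\rho_--\rho_1)/(\rho_1-\rho_+)}$ this reduces to checking that a quadratic expression in $x$ is positive for $x>L/K$, equivalently $\rho_1<\tilde\rho$; this is a direct algebraic computation. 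I expect this concavity bookkeeping to be the main (routine) obstacle, as one has to be careful that the index swap does not flip any inequality.

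Finally, for the limiting statement, I would note that $B\to 0$ hence $L\to 0$ as $u\to -\sqrt{T}$ (with $T$ now given by the analogue for $R>0$), so $\ep_1(\rho_-)\to 0$ and $\tilde\rho\to\rho_-$. Since $\tilde\rho<\overline\rho<\rho_-$, the squeeze gives $\overline\rho\to\rho_-$, concluding the proof.
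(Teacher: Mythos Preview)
Your proposal is correct and follows exactly the approach the paper takes: the paper does not give an independent proof of Lemma~\ref{l:ep1 decreasing2} but simply observes that \eqref{eq:ep1122} is \eqref{eq:ep11} with the indices $+$ and $-$ interchanged, so the proof of Lemma~\ref{l:ep1 decreasing} carries over verbatim. Your write-up actually spells out more of the transcription (endpoint signs, the swapped $\tilde\rho$, the concavity computation, and the squeeze for $\overline\rho\to\rho_-$) than the paper does.
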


Concerning the admissibility conditions, the expressions \eqref{eq:AD1}-\eqref{eq:AD2} stay exactly the same. Instead of \eqref{eq:num2}-\eqref{eq:nup2} we have
\begin{align}
\nu_- = v_{+2} + \frac{\rho_-\sqrt{T}}{R} - \frac{\rho_-}{R}(u+\sqrt{T}) - \sqrt{u+\sqrt{T}}\frac{\sqrt{(\sqrt{T}-u)\rho_-\rho_+}}{R}\sqrt{\frac{\rho_1-\rho_+}{\rho_--\rho_1}} \label{eq:num3}\\
\nu_+ = v_{+2} + \frac{\rho_-\sqrt{T}}{R} - \frac{\rho_-}{R}(u+\sqrt{T}) + \sqrt{u+\sqrt{T}}\frac{\sqrt{(\sqrt{T}-u)\rho_-\rho_+}}{R}\sqrt{\frac{\rho_--\rho_1}{\rho_1-\rho_+}} \label{eq:nup3}.
\end{align}
The expression \eqref{eq:beta2} for $\beta$ actually stays the same
\begin{align}\label{eq:beta3}
 \beta &= v_{+2} + \frac{\rho_-(\rho_1-\rho_+)\sqrt{T}}{R\rho_1} - (u+\sqrt{T})\frac{\rho_-(\rho_1-\rho_+)}{R\rho_1} \\ \nonumber
 &+ \sqrt{u+\sqrt{T}}\frac{\sqrt{(\sqrt{T}-u)\rho_-\rho_+}}{R\rho_1}\sqrt{(\rho_1-\rho_-)(\rho_+-\rho_1)}
\end{align}
and in particular we also have $\overline{\beta} = \lim_{u\rightarrow-\sqrt{T}} \beta$ as
\begin{equation}\label{eq:betabar2}
 \overline{\beta} = v_{+2} + \frac{\rho_-(\rho_1-\rho_+)\sqrt{T}}{R\rho_1}.
\end{equation}
Next we express $\overline{\ep_1} = \lim_{u\rightarrow-\sqrt{T}} \ep_1$ as
\begin{equation}\label{eq:epbar2}
\overline{\ep_1} = \frac{p(\rho_-) - p(\rho_1)}{\rho_1} - \frac{\rho_-\rho_+^2(\rho_--\rho_1)T}{\rho_1^2 R^2}
\end{equation}
and again use Lemma \ref{l:ep1 decreasing2} to observe that $\overline{\ep_1}(\rho_-) = \overline{\ep_1}(\rho_+) = 0$ and $\overline{\ep_1} > 0$ for all $\rho_1 \in (\rho_+,\rho_-)$.

The signs of $v_{+2} - \beta$ and $\beta - v_{-2}$ can be again shown to be negative at least on intervals $(\rho_+ +\varepsilon,\rho_-)$ and $(\rho_+,\rho_- -\varepsilon)$ respectively. Reverse order of $\rho_-$ and $\rho_+$ causes the admissibility conditions to change to

\begin{align}
\ep_2 &\geq -\frac{\rho_--\rho_1}{\rho_1\rho_-}P(\rho_-,\rho_1) + \ep_1\rho_1 \frac{\beta + v_{-2}}{\beta - v_{-2}}\frac{\rho_--\rho_1}{\rho_1\rho_-} - \ep_1 := N_1 \label{eq:AD113}\\ 
\ep_2 &\leq \frac{\rho_1-\rho_+}{\rho_+\rho_1}P(\rho_1,\rho_+) + \ep_1\rho_1\frac{v_{+2}+\beta}{v_{+2}-\beta}\frac{\rho_1-\rho_+}{\rho_+\rho_1} - \ep_1 := N_2.\label{eq:AD213}
\end{align}

The rest of the proof is now following the same steps as in the case $R < 0$ with $N_2$ playing the role of $M_1$ and vice versa. We show that $\overline{N_2} > \overline{N_1}$ and since for $\rho_1 = \rho_-$ we have $\overline{N_2} > 0$ we again conclude the existence of an admissible subsolution. The proof is finished.

\section{Concluding remarks}

We mention here some more remarks about the problem, concentrating on the case $R < 0$. The consequences of the admissibility inequalities \eqref{eq:AD1}-\eqref{eq:AD2} heavily depend on the signs of the expressions $\beta - v_{-2}$ and $v_{+2}-\beta$. Whereas it can be shown that $\beta - v_{-2}$ is always negative (not only in the limit $\overline{\beta}-v_{-2}$ as was shown in \eqref{eq:beta_vm}), this is not the case of $v_{+2}-\beta$, in fact we have $v_{+2}-\beta < 0$ on $(\rho_-,\rho_T)$ and $v_{+2}-\beta > 0$ on $(\rho_T,\rho_+)$, where $\rho_T = \frac{\rho_-\rho_+T}{\rho_- u^2 + \rho_+(T - u^2)}$. In particular for fixed $u > -\sqrt{T}$ one can search for a subsolution in two regions. On the interval $(\rho_-,\rho_T)$ the admissibility condition \eqref{eq:AD2} transfers to \eqref{eq:AD21} as stated in the previous section. However, on the interval $(\rho_T,\rho_+)$ the sign in \eqref{eq:AD21} is opposite and $M_2$ becomes the upper bound for $\ep_2$, not a lower bound.

Note also that the special case $v_{+2} = \beta$, i.e. $\rho = \rho_T$, considerably simplifies the admissibility condition \eqref{eq:AD2}. In particular the inequality \eqref{eq:AD2} becomes just $0 \leq -\ep_1(\rho_T)\rho_T v_{+2}$ which is satisfied if and only if $v_{+2} \leq 0$. In this case the subsolution exists if $M_1(\rho_T) > 0$. As it turns out in the examples below, this is not the optimal strategy and for $v_{+2} < 0$ there are subsolutions with $\rho_1 > \rho_T$ in the case when $M_1(\rho_T) < 0$.

In the case $v_{+2} > 0$, there are no subsolutions with $\rho_1 \in (\rho_T,\rho_+)$ as the expression on the right hand side of \eqref{eq:AD21} is negative on this interval. This no longer holds for $v_{+2} < 0$, where that expression becomes positive at least on some part of the interval $(\rho_T,\rho_+)$ and there may exist subsolutions with such density $\rho_1$.

Finally let us discuss the special case $v_{+2} = 0$. In this case the admissibility inequality \eqref{eq:AD2} simplifies to 
\begin{align}
\ep_2 &\geq -\frac{\rho_+-\rho_1}{\rho_+\rho_1}P(\rho_1,\rho_+) - \frac{\rho_1}{\rho_+}\ep_1 \qquad \text{ for } \rho_1 \in (\rho_-,\rho_T) \label{eq:AD98}\\
\ep_2 &\leq -\frac{\rho_+-\rho_1}{\rho_+\rho_1}P(\rho_1,\rho_+) - \frac{\rho_1}{\rho_+}\ep_1 \qquad \text{ for } \rho_1 \in (\rho_T,\rho_+) \label{eq:AD99}
\end{align}
and is trivially satisfied in $\rho_1 = \rho_T$. In particular it is easy to observe that the expression on the right hand side of \eqref{eq:AD98} and \eqref{eq:AD99} is negative whenever $\ep_1$ is positive. This means that there cannot be any subsolution on $(\rho_T,\rho_+)$, on the other hand the inequality \eqref{eq:AD98} imposes no further restriction on $\ep_2$ on $(\rho_-,\rho_T)$.

In order to illustrate how large is the set of Riemann initial data for which the existence of infinitely many admissible weak solutions is proved in this note, we provide here some examples of Riemann data allowing for existence of infinitely many solutions.

Let us take similarly as the example in \cite{ChDLKr} the pressure law $p(\rho) = \rho^2$ and let $\rho_- = 1$, $\rho_+ = 4$. In this case we have $T = \frac{(\rho_+-\rho_-)(p(\rho_+)-p(\rho_-))}{\rho_+\rho_-} = \frac{45}{4}$ and thus we are interested in Riemann data satisfying $v_{-2} - v_{+2} < \frac{\sqrt{45}}{2} \sim 3.35$. The case of the example in \cite{ChDLKr} was taken as $v_{+2} = 0$ and $v_{-2} = 2\sqrt{2}(\sqrt{\rho_+}-\sqrt{\rho_-}) = 2.83$.

Detail analysis of the problem gives us the following thresholds for various positive values of $v_{+2}$:
\begin{align}
v_{+2} = 0.1 \qquad &\Longrightarrow \qquad V \sim 2.75 \nonumber \\
v_{+2} = 1 \qquad &\Longrightarrow \qquad V \sim 2.955 \nonumber \\
v_{+2} = 2 \qquad &\Longrightarrow \qquad V \sim  3.05 \nonumber 
\end{align}
In this case the subsolution is obtained such that $v_{+2} - \beta$ is indeed negative and there are no subsolutions in the region where $v_{+2} - \beta$ is positive. Moreover it seems that $V$ is increasing with $v_{+2}$.

For $v_{+2} = 0$ the value of $V$ is approximately $2.7$.

For negative values of $v_{+2}$ we get even lower values of $V$ meaning larger set of initial data for which nonuniqueness holds. 
\begin{align}
v_{+2} = -0.1 \qquad &\Longrightarrow \qquad V \sim 2.65 \nonumber \\
v_{+2} = -1 \qquad &\Longrightarrow \qquad V \sim 1.8 \nonumber \\
v_{+2} = -2 \qquad &\Longrightarrow \qquad V \sim  1.02 \nonumber
\end{align}
In this case the subsolutions near the critical values of $V$ are obtained in the region where $v_{+2} - \beta$ is negative. Moreover it seems $V$ is decreasing with $\abs{v_{+2}}$ increasing.

Finally, for $R > 0$ similar remarks hold switching $v_{-2}$ and $v_{+2}$ and several inequality signs.

\section{Acknowledgment}

This work was done mainly during the visit of O.K. at EPFL in January 2017. During the workshop Ideal Fluids and Transport at IMPAN in Warsaw (February 13-15, 2017) the authors learned about similar results achieved by Ch. Klingenberg and S. Markfelder from W\"urzburg University. The authors would like to emphasize that both results were achieved independently on each other.

\end{document}